\newtheorem{lem}{Lemma}[section]
\newtheorem{thm}[lem]{Theorem}
\newtheorem{prop}[lem]{Proposition}
\newtheorem{coro}[lem]{Corollary}
\theoremstyle{definition}
\newtheorem{example}[lem]{Example}
\newcommand{\me}{\mathrm{e}}
\newcommand{\dif}{\mathrm{d}}
\newcommand{\Div}{\mathrm{div\,}}
\newcommand{\limite}[2]{\lim_{#1\rightarrow #2}}
\newcommand{\deru}[1]{#1^\prime}
\newcommand{\derd}[1]{#1^{\prime\prime}}
\newcommand{\norm}[1]{\left\Vert#1\right\Vert}
\newcommand{\abs}[1]{\left\vert#1\right\vert}
\newcommand{\set}[1]{\left\{#1\right\}}
\newcommand{\mc}[1]{\mathcal{#1}}
\newcommand{\Real}{\mathbb R}
\newcommand{\eps}{\varepsilon}
\newcommand{\To}{\rightarrow}
\newcommand{\B}{\mathcal{B}}
\newcommand{\ts}[1]{\mathbf{#1}}
\newcommand{\pd}[2]{\frac{\partial#1}{\partial#2}}
\newcommand{\td}[2]{\frac{\dif#1}{\dif#2}}
\newcommand{\bu}{{\mathbf{u}}}
\newcommand{\vs}{\left[\frac{r(R)}{R}\right]}
\begin{document}

%\headers{Infinite energy cavitating solutions}{P. V. 
%Negr\'on--Marrero and J. Sivaloganathan}

\title{Infinite energy cavitating solutions: a variational approach}

\author{Pablo V. Negr\'on--Marrero\thanks{pablo.negron1@upr.edu}\\
Department of Mathematics\\
University of Puerto Rico\\
Humacao, PR 00791-4300\\\and Jeyabal
Sivaloganathan\thanks{J.Sivaloganathan@bath.ac.uk}\\
Department of Mathematical Sciences\\University of Bath, Bath\\BA2
7AY, UK }

\date{}

\maketitle

%\doublespacing
\begin{abstract}
We study the phenomenon of cavitation for the displacement 
boundary-value problem of radial, isotropic compressible elasticity
for a class of stored energy functions of the form $W(F) + h(\det F)$, where  
$W$ grows like
$\norm{F}^n$,  and $n$
is the space dimension. In this case it follows (from a result of Vodop’yanov,  
Gol’dshtein and  Reshetnyak) that discontinuous deformations 
must have infinite energy. After characterizing the rate at which this energy 
blows up, we introduce a
modified energy functional which differs from the original by a null 
lagrangian, 
and for which cavitating energy  minimizers with finite energy exist. In 
particular, the Euler--Lagrange equations for the modified energy
functional are identical to those for the original problem except for
the boundary condition at the inner cavity. This new boundary condition 
states that a certain modified radial Cauchy stress function has to vanish at 
the inner 
cavity. This condition corresponds
to the radial Cauchy stress for the original functional diverging to $-\infty$ 
at
the cavity surface. Many previously known variational  results
for finite energy cavitating solutions now follow for the modified
functional, such as the existence of radial energy minimizers, satisfaction of 
the Euler-Lagrange equations for
such minimizers, and the existence of a critical boundary displacement for
cavitation. We also discuss a numerical scheme for computing these singular 
cavitating
solutions using regular solutions for punctured balls. We show the convergence 
of this
numerical scheme and give some numerical examples including one for the 
incompressible limit case. Our approach is motivated in 
part by the use of the ``renormalized energy" for Ginzberg-Landau vortices.

\end{abstract}
  
{\bf Key words:}  nonlinear elasticity, cavitation, infinite energy solutions.\\

{\bf AMS subject classifications:} 74B20, 93B40, 65K10\\

\section{Introduction}
Cavitation (i.e., the formation of holes) is a commonly observed
phenomenon in the fracture of polymers and metals (see \cite{GeLi58}).
In his seminal
paper \cite{Ba82}, Ball formulated a variational problem, in the
setting of nonlinear elasticity, for which the energy minimising
radial deformations of (an initially solid) ball
formed a cavity at the centre of the deformed ball when the imposed
boundary loads or displacements were sufficiently large. Following
this paper there have been numerous studies of aspects of the problem
of radial cavitation: some on analytical properties (see, e.g.,
\cite{St85}, \cite{Si86a}, \cite{Me}) and others relating to specific
stored energies (a helpful overview is contained in \cite{HP}).
Subsequent studies, e.g., of \cite{MuSp95}, \cite{SiSp2000a},
\cite{JSp2}, \cite{HMC} have addressed general analytic questions of
existence of cavitating energy minimisers in the non--symmetric case. In all of 
these works, the Dirichlet part of the stored energy function grows like 
$\norm{\nabla\ts{u}}^p$ with $n-1<p<n$, where $\ts{u}$ is a deformation and $n$ 
is the space dimension. The case $p=n-1$ for non cavitating deformations and 
for a three dimensional compressible neo--Hookean material (\cite{PeGu2015}), 
has been studied in \cite{HeRo2018} for axisymmetric bodies. 

In this paper we study radial solutions of the equations of elasticity 
for a spherically symmetric, isotropic, hyperelastic, compressible body, for 
the critical exponent $p=n$. It follows in this case that cavitating solutions 
for the corresponding Euler--Lagrange equations have infinite energy.  Using a 
variational approach, we show that for a general class of 
stored energy functions, the radial equilibrium equations do have cavitating 
solutions with infinite Cauchy stress at the origin and satisfying the outer 
displacement boundary condition. Moreover these solutions are 
characterized as finite energy minimizers of a modified energy functional (cf. 
\eqref{modsef1}) with the same equilibrium equations as the original 
functional. Our approach has connections with work of Henao and 
Serfaty \cite{HeSe2013} and Ca\~nulef-Aguilar and Henao \cite{CaDu2019} for 
incompressible materials and  with the use of the "renormalised" energy in the 
Ginzberg-Landau vortices problem \cite{BeBrHe1994}.

The case $n=2$ of this problem, which corresponds to a two dimensional 
compressible neo--Hookean 
material, was studied by Ball \cite[pp. 606-607]{Ba82} where he proved, for a 
particular stored energy function having logarithmic growth for small 
determinants, the existence of cavitating solutions of the equilibrium 
equations having infinite Cauchy stress at the origin. His approach is based on 
an application of Schauder's fixed point theorem, and although he did not solve 
the full boundary value problem (there was no attempt to match the outer 
boundary condition), the cavity size appears as a parameter in his argument 
which in principle could be adjusted to match the outer boundary condition.
The class of stored energy functions studied in this paper (cf. 
\eqref{ogden-mat}) includes compressible neo-Hookean stored energies widely 
used in applications. The results of this paper, in the case $n=2$, thus allow 
for a variational treatment of cavitation of a disc in two dimensions, which 
has 
not been previously possible for such neo--Hookean stored energy functions. The 
approach should also extend to treat axisymmetric cavitation of a cylinder in 
three dimensions (the work in \cite{HeRo2018}  on 
axisymmetric deformations may be relevant here).

Consider a body which in its reference configuration occupies
the region
\begin{equation}\label{eqn2.1}
\B=\{ \ts{x}\in \Real ^n~:~ \norm{\ts{x}}<1\},
\end{equation}
where $n=2,3$  and $\norm{\cdot}$  denotes the Euclidean norm. Let
$\ts{u}:\B\rightarrow\Real^n$ denote a deformation of the body and
let its \textit{deformation gradient} be
\begin{equation}\label{eqn2.2}
\nabla\ts{u}(\ts{x})=\td{\ts{u}}{\ts{x}}(\ts{x}).
\end{equation}
For smooth deformations, the requirement that $\ts{u}(\ts{x})$ is
locally \textit{invertible and preserves orientation} takes the
form
\begin{equation}\label{eqn2.3}
\det \nabla\ts{u}(\ts{x})>0,\quad\ts{x}\in\B.
\end{equation}
Let $W:M_+^{n\times n}\rightarrow\Real$ be the \textit{stored
energy function} of the material of the body where $M_+^{n\times
n}=\{ \ts{F}\in M^{n\times n}~:~\det \ts{F}>0\}$ and $M^{n\times
n}$ denotes the space of real $n\times n$ matrices. 
%Since we are interested in modelling large deformations, 
We assume that the
stored energy function $W$ satisfies $W\rightarrow\infty$ as
either $\det \ts{F}\rightarrow 0^+$ or
$\norm{\ts{F}}\rightarrow\infty$. The total energy stored in the
body due to the deformation $\ts{u}$ is given by
\begin{equation}\label{eqn2.4}
E(\ts{u})=\int_\B W(\nabla\ts{u}(\ts{x}))\,\dif\ts{x}.
\end{equation}
We consider the problem of determining a 
configuration of the body that satisfies~(\ref{eqn2.3}) almost everywhere and
minimizes~(\ref{eqn2.4}) among all functions
satisfying the boundary condition:
\begin{equation}\label{bcond}
\ts{u}(\ts{x})=\lambda\ts{x},\quad\ts{x}\in\partial\B,
\end{equation}
where $\lambda>0$ is given. Formally, a sufficiently smooth minimizer satisfies 
the equilibrium equations
\begin{equation}\label{EL}
 \Div\left[\td{W}{\ts{F}}(\nabla\ts{u}(\ts{x}))\right]=\ts{0}.
\end{equation}
Note that if the stored energy $W$ satisfies a growth condition of the form
\begin{equation}\label{growth}
c_1\norm{\ts{F}}^n + c_2 \leq W(\ts{F}),\quad\forall\, \ts{F}\mbox{ with 
}\det\ts{F}>0,
\end{equation}
then (cf. \cite{VoGoYu1979}) any discontinuous deformation $\bu$ of 
$\B$ with $\det\nabla\bu>0$ a.e., must have infinite energy. 

For later reference we mention that if $\ts{u}$ is a smooth solution of 
\eqref{EL}, then (see \cite{Green1973})
\begin{equation}\label{Green_div} 
\Div\left[W(\nabla\ts{u})\ts{x}+\left[\pd{W}{\ts{F}}(\nabla\ts{u})\right]
^T\!\!(\ts{u}-(\nabla\ts{u})\ts{x})\right]=nW(\nabla\ts{u}).
\end{equation}
If $\ts{u}$ is smooth except at the origin where it opens up a cavity, 
and 
$\B_\eps$ is a ball of radius 
$\eps>0$ around the origin, then integrating this equation over the punctured 
ball 
$\B\setminus\B_\eps$, we get 
that
\begin{eqnarray}
 n\int_{\B\setminus\B_\eps} 
W(\nabla\ts{u}(\ts{x}))\,\dif\ts{x}&=&\int_{\partial\B}\left[W(\nabla\ts{u})\ts{
x}+\left[\pd{W}{\ts{F}}(\nabla\ts{u})\right]^T\!\! 
(\ts{u}-(\nabla\ts{u})\ts{x})\right]\cdot{\ts{N}}\,\dif s(\ts{x})\nonumber\\
&&-\int_{\partial\B_\eps}\left[W(\nabla\ts{u})\ts{
x}+\left[\pd{W}{\ts{F}}(\nabla\ts{u})\right]^T\!\! 
(\ts{u}-(\nabla\ts{u})\ts{x})\right]\cdot{\ts{N}}\,\dif 
s(\ts{x}),\label{Green_energy}
\end{eqnarray}
where $\ts{N}$ is the outer normal to each boundary. Thus the blow up in the 
energy as $\eps$ becomes small, comes from the integral over the boundary 
$\partial\B_\eps$. Note that this integral is the sum of two terms:
\begin{equation}\label{singBT}
\int_{\partial\B_\eps}\left[W(\nabla\ts{u})\ts{I}-\left[\pd{W}{\ts{F}}(\nabla\ts
{ u }
)\right]^T\!\! 
(\nabla\ts{u})\right]\ts{x}\cdot{\ts{N}}\,\dif 
s(\ts{x}),\,\, 
\int_{\partial\B_\eps}\left[\pd{W}{\ts{F}}(\nabla\ts{u})\right]^T\!\! 
\ts{u}\cdot{\ts{N}}\,\dif 
s(\ts{x}),
\end{equation}
the second one representing as $\eps\To0$, the work done in opening the 
singularity. The tensor in brackets in the first boundary integral above is the 
Eshelby energy--momentum tensor (cf. \cite{Es1951}, \cite{GoMi1996}). It is 
interesting to note 
that if the stored energy function grows like $\norm{\nabla\ts{u}}^p$, then for 
$p<n$ both terms in \eqref{singBT} tend to zero as $\eps\To0$ (cf. 
\cite{SiSp2000b}), while both tend to infinity if $p>n$. In the 
case $p=n$ and in the radial case, we will show that the first term has a 
finite limit while the second one is unbounded as $\eps\To0$.

If the material is homogeneous and $W$ is isotropic and frame
indifferent, then it follows that
\begin{equation}\label{eqn2.5}
W(\ts{F})=\Phi(v_1,\ldots,v_n),\quad\ts{F}\in M_+^{n\times n},
\end{equation}
for some function $\Phi:\Real_+^n\To\Real$ symmetric in its
arguments, where $v_1,\ldots,v_n$ are the eigenvalues of
$(\ts{F}^t\ts{F})^{1/2}$ known as the \textit{principal stretches}.  

We now restrict attention to the special case in which the deformation
$\ts{u}(\cdot)$ is \textit{radially symmetric}, so that
\begin{equation}\label{eqn2.8}
\ts{u}(\ts{x})=r(R)\,\frac{\ts{x}}{R},\quad\ts{x}\in \B,
\end{equation}
for some scalar function $r(\cdot)$, where $R=\norm{\ts{x}}$. In this
case one can easily check that
\begin{equation}\label{eqn2.9}
v_1=\deru{r}(R)~~,~~v_2=\cdots=v_n=\frac{r(R)}{R}.
\end{equation}
Thus (\ref{eqn2.4}) reduces to
\begin{equation}\label{eqn2.10}
E(\ts{u})=\omega_nI(r)=\omega_n\int_0^1 R^{n-1}\Phi \left(
\deru{r}(R),\frac{r(R)}{R},\ldots,\frac{r(R)}{R}\right)\,\dif R,
\end{equation}
where $\omega_n=2\pi $ or $\omega_n=4\pi$ if $n=2\ \mathrm{or} \ 3$
respectively. (In general $\omega_n$ is area of the unit sphere in
$\Real ^n$.)

In accord with (\ref{eqn2.3}) we have the inequalities
\begin{equation}\label{eqn2.11}
\deru{r}(R),\,\frac{r(R)}{R}>0,\quad 0<R<1,
\end{equation}
and \eqref{bcond} reduces to:
\begin{equation}\label{eqn2.12}
r(1)=\lambda.
\end{equation}
Our problem now is to minimize the functional $I(\cdot)$ over the set
\begin{equation}\label{alambda}
\mathcal{A}_\lambda=\set{r\in
W^{1,1}(0,1)\,:\,r(1)=\lambda,\,\deru{r}(R)>0\mbox{
for~a.e.~}R\in(0,1),\, r(0)\ge0}.
\end{equation}
Formally, the Euler--Lagrange equation for $I(\cdot)$ is given by
\begin{equation}\label{eqn2.13}
\td{}{R}\left[R^{n-1}\Phi_{,1}(r(R))\right]=(n-1)R^{n-2}\Phi_{,2}(r(R)),
\quad0<R<1,
\end{equation}
subject to~(\ref{eqn2.12}) and $r(0)\ge0$, where:
\begin{equation}\label{Phi-notation}
\Phi_{,i}(r(R))=\Phi_{,i}\left(\deru{r}(R),\displaystyle\frac{r(R)}{R},\ldots,
\frac{r(R)}{R}\right),\ \ \ i=1,...,n .
\end{equation}

If $c=r(0)>0$, then the deformed ball contains a spherical cavity of radius 
$c$. 
In the case $n=2$,  Ball \cite[pp. 606-607]{Ba82} gives an example of a stored 
energy function satisfying \eqref{growth} and proves existence of corresponding 
radial cavitating equilibrium solutions of \eqref{eqn2.13} which (necessarily) 
have infinite energy. His approach is based on an application of Schauder's 
fixed point
theorem, and although he does not solve the full boundary-value problem (there
was no attempt to match the outer boundary condition), the cavity size appears
as a parameter in his argument which, in principle, could be adjusted to match 
the
outer boundary condition. In this paper we give a characterization of 
cavitating 
equilibria with \emph{infinite energy}   as minimizers of a \emph{modified} 
energy functional, which
is related to the growth of the radial component of the Cauchy stress of an 
equilibrium solution near a point of cavitation.

To 
highlight some of the general structure of the underlying problem, we will 
state 
certain of our results for stored energy functions of the form\footnote{ 
Our results can be readily extended to more general stored energies, e.g.,  of 
the form 
$
\frac{\kappa}{n} \sum_{i=1}^nv_i^n+\psi 
(v_1,...,v_n)+h(v_1v_2\cdots
	v_n)$, under suitable assumptions on $\psi$.}
\begin{equation}\label{ogden-mat}
\Phi(v_1,\ldots,v_n)=\frac{\kappa}{n} \sum_{i=1}^nv_i^n+h(v_1v_2\cdots
v_n),
\end{equation}
where $\kappa >0$ and $h\,:\,(0,\infty)\To[0,\infty)$ is a $C^1$ function that
satisfies
\begin{subequations}\label{hcond}
\begin{eqnarray}
 \derd{h}(d)>0,&~&\forall\,d>0,\label{hcond1}\\
\limite{d}{0^+}h(d)=\infty,&\quad&
\limite{d}{\infty}\frac{h(d)}{d}=\infty,\label{hcond2}\\
\limite{d}{0^+}\deru{h}(d)=-\infty,&\quad&
\limite{d}{\infty}\deru{h}(d)=\infty.\label{hcond3}
\end{eqnarray}
\end{subequations}
In this case, the energy 
functional $I(r)$ in \eqref{eqn2.10} takes the form
\begin{equation}\label{originalsef}
I(r)=\int_0^1 R^{n-1}\left[\frac{\kappa}n \left((r'(R))^n +(n-1) 
\left(\frac{r(R)}{R}\right)^n\right) +h(\delta (R))\right] \, dR,
\end{equation} 
where
\[
  \delta(R) = r'(R) \left(\frac{r(R)}{R}\right)^{n-1}.
\]
It is clear that (discontinuous) radial deformations with $r(0)>0$ must have 
infinite energy as a result of the second bracketed term in the integrand. In 
Section \ref{sec:modfunc}, in the spirit of the ``renormalized energy 
approach'' 
for Ginzberg-Landau  vortices (see, e.g., \cite{BeBrHe1994}),  we characterize 
the order of the singularity in the energy and in the radial component of 
Cauchy 
stress for a cavitating solution as logarithmic in all dimensions. To motivate 
the form of the regularisation, we use the specialization of 
\eqref{Green_div} 
to the radial case satisfied by smooth solutions of the radial equilibrium 
equation \eqref{eqn2.13}:
\begin{equation}\label{div_ident}
R^{n-1}\Phi(r(R)) = 
\frac{d}{dR}\left[\frac{R^n}{n}\left(\Phi(r(R)) 
-r'\Phi_{,1}(r(R))\right) +\frac{r^n}{n}T(r(R))\right],
\end{equation} 
with the notation in \eqref{Phi-notation} and where 
\begin{equation}\label{Cauchy_stress}
T(r(R))=\left[\frac{R}{r(R)}\right]^{n-1}\Phi_{,1}(r(R)),
\end{equation}
is the radial  
component of the Cauchy stress. Integrating the above identity from $R=\eps$ 
to $R=1$ for a cavitating solution and using the form of the stored energy 
function \eqref{ogden-mat}, we show that all boundary terms have a 
finite limit as $\epsilon 
\rightarrow 0$ apart from the term
\begin{equation}\label{WorkT}
-\lim_{\epsilon\rightarrow 0} 
\frac{r(\epsilon)^n}{n}\,T(r(\epsilon)),
\end{equation}
which corresponds to the second term in \eqref{singBT}. Thus, the 
infinite energy of a 
radial solution of the equilibrium equation with 
$r(0)>0$  corresponds to a singularity in the radial Cauchy stress. Thus the 
term \eqref{WorkT} 
can be formally interpreted as the (infinite) 
work required to open the cavity. (If $r(0)=0$, then this term is zero.) Thus 
for a cavitating solution
\[
\lim_{\eps\rightarrow 0}\left[\int_\eps^1 R^{n-1} \Phi\left(r'(R), 
\frac{r(R)}{R},\ldots, 
\frac{r(R)}{R}\right)\, dR+
\frac{r(\epsilon)^n}{n}\,T(r(\epsilon))\right],
\]
is finite.
Using the characterization of the asymptotic behaviour of the  
Cauchy stress given in Proposition \ref{Tasint}, we introduce a modified
energy functional, given by
\begin{eqnarray}
\hat I(r)&=&\int_0^1 R^{n-1} \Phi\left(r'(R), \frac{r(R)}{R},\ldots, 
\frac{r(R)}{R}\right)\, dR\nonumber\\&& -\dfrac{\kappa(n-1)}{n} 
\lim_{R\rightarrow 0}r^n(R)\ln 
\left(\frac{r(R)}{R}\right),\label{modsef1b}
\end{eqnarray}
where the last term accounts for the singular behaviour in \eqref{WorkT}.
This functional can also be expressed as
\begin{eqnarray*}
\hat I(r)&=&\int_0^1  
R^{n-1}\left[\frac{\kappa}{n}\left(r'\right)^n 
+h\left(\delta (R)\right)
+\kappa(n-1)\delta(R)\left(\dfrac{1}{n}+ \ln\left(\frac{r}{R}\right)\right)
\right]\,\dif R\nonumber\\
&&~~~~~~~~~~~~~~~~~~~~~~- \frac{\kappa(n-1)}{n}\lambda^n\ln\lambda.
\end{eqnarray*}
It is easy to now show that there are $r\in\mathcal{A}_\lambda$, with $r(0)>0$  
for which $\hat I(r)$ 
is finite. Moreover, the 
Euler-Lagrange equation for this modified functional coincides with 
that for the original functional \eqref{originalsef} since, by construction, 
they differ by a null lagrangian term (see Theorem \ref{thm:3}). Moreover, for 
many deformations 
with $r(0)=0$, in particular for the homogeneous deformation $r(R)\equiv 
\lambda R$, the two energies coincide. However, we will show that for $\lambda$ 
sufficiently large, energy minimisers for the modified functional must satisfy 
$r(0)>0$.

Many known results for finite energy cavitating solutions 
(see, e.g., \cite{Ba82},  \cite{St85}, \cite{Si86a}) now follow by similar 
methods for
the modified functional \eqref{modsef1b}. In particular in Section 
\ref{sec:exist-EL} we show
that minimizers of the modified functional exist and that they satisfy the
corresponding Euler-Lagrange equations for such minimizers.  Moreover, in 
Theorem \ref{lambda_crit} we show
the existence of a critical boundary displacement $\lambda_c$ for
cavitation for the modified functional, below of which the minimizers of the
modified functional must be homogeneous.

In Section \ref{sec:punct} we discuss a numerical scheme for
computing the cavitating solutions of the modified functional via solutions
on punctured balls. In the usual cavitation problem, the convergence of the
solutions on these punctured balls to a solution on the full ball follows from
the corresponding properties of solutions of the Euler-Lagrange equations and by 
a phase
plane analysis (cf. \cite{Si86a}). Since the Euler-Lagrange equations for our 
modified
functional are equal (except for the boundary condition at the inner cavity) to
those of the original functional, the proof of
convergence of the punctured ball solutions in the case of the modified
functional is essentially the same as that for a functional in which we have 
$\frac{\kappa}{p} \sum_{i=1}^nv_i^p+h(v_1v_2\cdots
v_n)$ with $p<n$, instead of \eqref{ogden-mat}. In this
section we also discuss some aspects of the convergence of the corresponding
strains of the punctured ball solutions depending on the size of the boundary
displacement. Finally we close with some numerical examples in Section
\ref{sec:num} which includes one for the incompressible limit case.

\section{The modified energy functional}\label{sec:modfunc}
We call any solution of \eqref{eqn2.13} for which $r(0)>0$ a 
\textit{cavitating solution}. In this section we introduce a modified functional
$\hat{I}(\cdot)$ defined over $\mathcal{A}_\lambda$, having the same 
Euler-Lagrange equation as $I(\cdot)$, for which cavitating solutions have 
finite modified energy, and for which the corresponding modified radial Cauchy 
stress function is increasing on cavitating solutions.  To achieve this, we 
first \emph{assume} the existence of a cavitating solution and obtain 
corresponding estimates that help us to better understand the rate at which the 
energy of a cavitating solution and the corresponding radial Cauchy stress blow 
up at the origin. We then use these estimates to construct a modified 
variational problem, using which we are able to prove \emph{a posteori} that 
such 
solutions exist.

Some of the results in this section are stated for general stored energy 
functions satisfying the following conditions:\\
\begin{itemize}
 \item[H1:]
$ \Phi_{,11}(q,v,\ldots,v)>0,\quad\forall\,q,v>0;$\\
\item[H2:]
$\displaystyle\frac{\Phi_{,1}(q,v,\ldots,v)-\Phi_{,2}(q,v,\ldots,v)}{q-v} 
+\Phi_{,12}(q,v,\ldots,v)\geq 0,\quad\forall\,q,v>0,\quad q\ne v$;\\ 
\item[H3:]
$\displaystyle
R(q,v)\equiv\frac{q\Phi_{,1}(q,v,..v)-v\Phi_{,2}(q,v,..,v)}{q-v}>0,\quad\forall 
q\ne v$;\\
\item[H4:]
$\displaystyle\frac{\partial R(q,v)}{\partial q}\geq 0$ 
for $0< q\leq v$.\\
\end{itemize}
Is easy to check that \eqref{ogden-mat} satisfy these conditions as well.

We shall make use of the following well known properties of solutions of 
\eqref{eqn2.13} (cf. \cite{Ba82}, \cite{St85}).
\begin{prop}\label{well_known_properties}
Let $r\in C^2((0,1])\cap C([0,1])$ be a solution of \eqref{eqn2.13} on $[0,1]$ 
satisfying $r(0)>0$ and such that 
$\delta (R) := r'(R)\left(\frac{r(R)}{R}\right)^{n-1}$ is bounded on $[0,1]$. 
Then
\begin{enumerate}
\item
$r'(R)<\frac{r(R)}{R}$ on $(0,1]$,
\item
$r'(R) \rightarrow 0$ and $\frac{r(R)}{R}\rightarrow \infty$ as 
$R\rightarrow 0$.
\item
Let $\Phi$ satisfy (H1) and (H2). 
Then any cavitation solution of \eqref{eqn2.13} 
satisfies $\derd{r}(R)\geq 0$.
\end{enumerate}
\end{prop}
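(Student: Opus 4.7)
The plan is to tackle the three assertions in the order (2), (1), (3), since (1) will rely on (2) and (3) on (1).

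For (2), the argument will be elementary. Since $r\in C([0,1])$ and $r(0)>0$, continuity gives $r(R)/R \geq r(0)/R \to\infty$ as $R\to 0^+$. The bound $\delta(R)\leq M$ combined with $r'>0$ then yields
$$
0 < r'(R) \leq M\Bigl(\frac{R}{r(R)}\Bigr)^{n-1} \to 0,
$$
so $r'(R)\to 0$ as well.

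For (1), part (2) supplies the inequality $r'(R) < r(R)/R$ on some interval $(0,\rho)$. To extend it to all of $(0,1]$, I would recast \eqref{eqn2.13} as an autonomous first-order system in the variables $p = r'$, $v = r/R$ with ``time'' $\tau = \ln R$. A direct expansion of \eqref{eqn2.13} gives
$$
\frac{dp}{d\tau} = \frac{n-1}{\Phi_{,11}(p,v,\ldots,v)}\bigl[\Phi_{,2}-\Phi_{,1}-\Phi_{,12}(p-v)\bigr], \qquad \frac{dv}{d\tau} = p - v,
$$
where the partial derivatives of $\Phi$ are evaluated at $(p,v,\ldots,v)$. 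Because $\Phi$ is symmetric in its arguments, $\Phi_{,1}=\Phi_{,2}$ on the diagonal $\{p=v\}$, and consequently every diagonal point is an equilibrium of the system. If the trajectory associated with our solution $r$ were to meet the diagonal at some finite $\tau_*=\ln R_*$, local Lipschitz uniqueness of ODE solutions would force it to coincide with that equilibrium for all $\tau$, giving $r(R)=cR$ on $(0,1]$ and contradicting $r(0)>0$. The only subtle point will be checking local Lipschitzness of the vector field in a neighbourhood of the diagonal, which rests on smoothness of $\Phi$ together with $\Phi_{,11}>0$ along the trajectory; this is where I expect the main technical care to be needed.

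Finally, for (3), the conclusion follows by a direct manipulation of the expanded Euler--Lagrange identity
$$
R\,\Phi_{,11}\,r''(R) = (n-1)\bigl(r/R - r'\bigr)\left[\frac{\Phi_{,1}-\Phi_{,2}}{r'-r/R}+\Phi_{,12}\right],
$$
where I have grouped terms to exhibit the use of (H2). The prefactor $r/R - r'$ is strictly positive by (1), the bracket is non-negative by (H2), and $\Phi_{,11}>0$ by (H1); hence $r''(R)\geq 0$ on $(0,1]$.
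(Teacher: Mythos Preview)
The paper does not supply its own proof of this proposition; it simply records the result as ``well known'' and cites Ball \cite{Ba82} and Stuart \cite{St85}. Your arguments for (2) and (3) are correct and coincide with the standard elementary proofs found in those references: part (2) is immediate from continuity of $r$ at $0$ together with boundedness of $\delta$, and part (3) is exactly the expanded form of the Euler--Lagrange equation, grouped so that (H1), (H2) and part (1) each control one factor.

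Your treatment of (1)---rewriting \eqref{eqn2.13} as an autonomous planar system in $(p,v)=(r',r/R)$ with time $\tau=\ln R$, observing that every diagonal point is an equilibrium, and then invoking ODE uniqueness to prevent the trajectory from touching the diagonal---is precisely the standard route (it is the mechanism behind Ball's Proposition~6.2 and is made explicit in Stuart's and Sivaloganathan's phase-plane analyses). One small remark: the uniqueness step requires $\Phi_{,11}>0$ in a neighbourhood of the diagonal, i.e.\ hypothesis (H1), which the proposition as written invokes only for part (3). In the cited sources (H1) is a standing assumption throughout, so this is a cosmetic discrepancy in the statement rather than a genuine gap in your argument; you have in any case flagged it yourself.
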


\subsubsection*{Asymptotic behaviour of the Radial Cauchy Stress}
The Cauchy stress \eqref{Cauchy_stress} corresponding to a solution of the 
radial equilibrium equation \eqref{eqn2.13} satisfies
\begin{equation}\label{TodeR}
\frac{d}{dR}\, 
T(r(R))=\frac{(n-1)R^{n-1}}{r^{n}(R)}\left(\frac{r(R)}{R}\Phi_{,2}(r(R))
-r'\Phi_{,1}(r(R))\right).
\end{equation}
For later use, we invert the relation 
$T=\frac{\Phi_1\left(v_1,v_,..,v\right)}{v^{n-1}}$ to obtain $v_1=g(v,T)$  and 
then rewrite \eqref{TodeR} in terms of the independent variable 
$v=\frac{r}{R}$ as
\begin{equation}\label{Todev}
\frac{dT(v)}{dv}= -\frac{(n-1)}{v^{n}}\left(\frac{{v{\Phi_{,2} 
(g(v,T),v,..v)-g(v,T){\Phi_{,1}(g(v,T),v,..,v)}}}}{v-g(v,T)}\right).
\end{equation}
It follows from \eqref{TodeR}, 
\eqref{Todev}, and (H3) that $T(r(\cdot))$ is monotonic as a 
function of $R$ or $v$ along radial solutions.

For the specific class of stored energy functions \eqref{ogden-mat}, equation 
\eqref{TodeR} becomes
\[
\frac{d}{dR} T(r(R)) =\frac{(n-1)\kappa}{R} 
-\frac{(n-1)R^{n-1}\kappa}{r^{n}}(r')^n .
\]
The second term on the right hand side is integrable on $[0,1]$ for a 
cavitating 
solution $r$ and so 
\[
T(r(R)) = {(n-1)\kappa}\ln(R) + O(1) \ \mathrm{as} \ R\rightarrow 0.
\]
In addition, for the stored energy function \eqref{ogden-mat}, equation 
\eqref{Todev} reduces to
\begin{eqnarray*}
\frac{dT(v)}{dv}&=& 
-\frac{(n-1)}{v^{n}}\kappa\left(\frac{v^n-g(v,T)^n}{v-g(v,T)}\right)\\&=& 
-\frac{\kappa(n-1)}{v}\left(1 +\frac{g}{v}+...+\frac{ g^{n-1}}{v^{n-1}}\right).
\end{eqnarray*}
Now integrating on $[\lambda, v]$ yields
\begin{eqnarray*}
T(v)+\kappa(n-1) \ln v &=& T(\lambda) +\kappa(n-1) \ln\lambda\\
&&-\kappa(n-1)\int_{\lambda}^{v}\left(\frac{g}{w^2}+\ldots+ 
\frac{g^{n-1}}{w^n}\right) \, \dif w,
\end{eqnarray*}
showing that the growth in $T(v)$ is logarithmic in the variable $v$ as 
$v\rightarrow \infty$. We summarize these results in the following proposition.
\begin{prop}\label{Tasint}
 Let $r\in C^2((0,1])\cap C([0,1])$ be a solution of \eqref{eqn2.13} on $[0,1]$ 
satisfying $r(0)>0$. Then, for the stored energy function \eqref{ogden-mat}, 
the radial component of Cauchy stress given by \eqref{Cauchy_stress} satisfies
\[
 \limite{R}{0^+}(T(r(R)) - {(n-1)\kappa}\ln(R))
\]
is finite, and as a function of the circumferential strain $v=\frac{r}{R}$,
\[
 \limite{v}{\infty}(T(v)+\kappa(n-1) \ln v)
\]
is finite. In particular, 
$\limite{R}{0^+}T(r(R))=\limite{v}{\infty}T(v)=-\infty$.
\end{prop}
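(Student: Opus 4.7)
The plan is to combine an explicit computation of the ODE satisfied by $T(r(R))$ along a cavitating solution with the qualitative information from Proposition \ref{well_known_properties} to isolate and subtract off the logarithmic singularity. First, I would compute $\frac{d}{dR}T(r(R))$ by substituting the explicit expressions for $\Phi_{,1}$ and $\Phi_{,2}$ coming from \eqref{ogden-mat} into \eqref{TodeR}. The $h'(\delta)$-contributions cancel exactly in the combination $v\Phi_{,2}-r'\Phi_{,1}=\kappa(v^n-(r')^n)$, and one is left with
\[
\frac{d}{dR}T(r(R)) = \frac{(n-1)\kappa}{R} - \frac{(n-1)\kappa R^{n-1}(r'(R))^n}{r(R)^n},
\]
so that the only obstruction to integrability near $R=0$ is the $1/R$ term and the remaining piece must be controlled.

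Second, I would use that $r(0)>0$ (so $r$ is bounded below by a positive constant on $[0,1]$) together with Proposition \ref{well_known_properties}(2), which gives $r'(R)\to 0$ as $R\to 0^+$ and hence bounds $r'$ on $[0,1]$, to conclude that $\int_0^1 R^{n-1}(r'(R))^n/r(R)^n\, \dif R$ is finite. Integrating the ODE above from $R$ to $1$ and letting $R\to 0^+$ then yields
\[
\lim_{R\to 0^+}\bigl(T(r(R))-(n-1)\kappa\ln R\bigr) = T(r(1)) + \int_0^1 \frac{(n-1)\kappa s^{n-1}(r'(s))^n}{r(s)^n}\, \dif s,
\]
which is finite. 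Since $(n-1)\kappa\ln R\to-\infty$, this gives the first limit and forces $T(r(R))\to -\infty$.

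For the statement in the variable $v$ I would follow the algebraic reduction already carried out in the paragraph preceding the proposition: inverting $T=\Phi_{,1}/v^{n-1}$ to $v_1=g(v,T)$, substituting into \eqref{Todev} for $\Phi$ of the form \eqref{ogden-mat}, and using the factorisation $(v^n-g^n)/(v-g)=\sum_{k=0}^{n-1}g^k v^{n-1-k}$ to obtain
\[
\frac{dT}{dv}=-\frac{\kappa(n-1)}{v}\left(1+\frac{g}{v}+\cdots+\frac{g^{n-1}}{v^{n-1}}\right).
\]
Integrating from $\lambda$ to $v$ and recalling that along the solution $g(v,T(v))=r'$ remains bounded (by the previous step), each term $g^k/w^{k+1}$ with $k\ge 1$ is integrable on $[\lambda,\infty)$, so $\lim_{v\to\infty}(T(v)+\kappa(n-1)\ln v)$ is finite, and consequently $T(v)\to -\infty$. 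The only real obstacle is the integrability step in the second paragraph, which is handled cleanly once Proposition \ref{well_known_properties} is invoked; everything else is essentially the organisation of the computations already displayed before the proposition.
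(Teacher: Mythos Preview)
Your proposal is correct and follows essentially the same route as the paper: the computations you describe are precisely those displayed in the paragraphs immediately preceding the proposition, and your justification for the integrability of $R^{n-1}(r')^n/r^n$ via $r\ge r(0)>0$ and $r'$ bounded is the natural way to fill in the paper's terse assertion that ``the second term on the right hand side is integrable on $[0,1]$ for a cavitating solution''. One small caveat: when you invoke Proposition~\ref{well_known_properties}(2) to get $r'\to 0$, note that that proposition carries the extra hypothesis that $\delta(R)$ is bounded, which is not explicitly assumed in Proposition~\ref{Tasint}; the paper's own argument is silent on this point as well, so this is a shared (and minor) gap rather than a defect in your approach.
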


\subsubsection*{Asymptotic behaviour of the determinant} 
%We use the following result from \cite{St85}:
\begin{lem}\cite[Theorem 3.1]{St93}\label{detR1}
Assume that (H1)--(H4) hold. Then the determinant function 
$\delta$ (see 
\eqref{originalsef}) corresponding to a cavitation solution, 
as a function of the circumferential strain $v=\frac{r}{R}$, satisfies:
\begin{equation}
\frac{1}{v^{n-1}}\Phi_{,11}\td{\delta}{v}=(n-1)v^{-1}\left(q(v)-v\right)\frac{
\partial R}{\partial q}(q(v),v),
\end{equation}
where $q(v)=\delta(v)/v^{n-1}$. Hence $\delta(v)$ is a monotone decreasing 
function of $v$.
\end{lem}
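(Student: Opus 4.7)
The plan is to derive the claimed identity by starting from the radial Euler--Lagrange equation \eqref{eqn2.13} and applying the chain rule to change the independent variable from $R$ to $v=r/R$. Writing $q=r'(R)$, so that $\delta=qv^{n-1}$ and $q(v)=\delta(v)/v^{n-1}$ is the radial stretch, the key ancillary identity is
\[
\frac{dv}{dR}=\frac{q-v}{R},
\]
which follows directly from differentiating $v=r/R$. Expanding \eqref{eqn2.13} gives the pointwise form $R\,\dfrac{d\Phi_{,1}}{dR}=(n-1)(\Phi_{,2}-\Phi_{,1})$, and applying the chain rule to $\Phi_{,1}(q,v,\ldots,v)$ yields
\[
R\,\Phi_{,11}\,q' = (n-1)(\Phi_{,2}-\Phi_{,1}) - (n-1)(q-v)\Phi_{,12}.
\]

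Next I would compute $d\delta/dR$ directly and use the chain rule:
\[
\frac{d\delta}{dv}=\frac{d\delta/dR}{dv/dR}
=\frac{R\,q'\,v^{n-1}}{q-v}+(n-1)\,q\,v^{n-2}.
\]
Multiplying by $v^{-(n-1)}\Phi_{,11}$ and substituting the Euler--Lagrange expression for $R\Phi_{,11}q'$ from the previous step will give
\[
\frac{1}{v^{n-1}}\Phi_{,11}\,\frac{d\delta}{dv}
=\frac{(n-1)(\Phi_{,2}-\Phi_{,1})}{q-v}-(n-1)\Phi_{,12}+\frac{(n-1)\,q\,\Phi_{,11}}{v}.
\]
To close the argument, I would then compute $\partial R/\partial q$ from the definition of $R(q,v)$ in (H3), being careful that both $\Phi_{,1}(q,v,\ldots,v)$ and $\Phi_{,2}(q,v,\ldots,v)$ depend on $q$ through the first slot. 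A routine quotient-rule computation gives
\[
\frac{\partial R}{\partial q}
=\frac{v(\Phi_{,2}-\Phi_{,1})}{(q-v)^2}+\frac{q\,\Phi_{,11}}{q-v}-\frac{v\,\Phi_{,12}}{q-v},
\]
and multiplying by $(n-1)v^{-1}(q-v)$ reproduces exactly the right-hand side of the displayed expression above, establishing the identity.

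For the monotonicity conclusion, I would combine three ingredients: (H1) gives $\Phi_{,11}>0$; part 1 of Proposition \ref{well_known_properties} gives $q<v$, so $q-v<0$; and (H4) gives $\partial R/\partial q\ge 0$ for $0<q\le v$. Hence the right-hand side of the identity is non-positive, and so $d\delta/dv\le 0$, proving that $\delta$ is monotonically decreasing as a function of $v$. The main calculational obstacle is the cross-term bookkeeping involving $\Phi_{,12}$: a naive calculation of $\partial R/\partial q$ that forgets the $q$-dependence inside $\Phi_{,2}$ produces a mismatched $\Phi_{,12}$ term, so the identity only closes up once the full quotient-rule computation is carried out. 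No additional hypotheses beyond (H1)--(H4) and the conclusions of Proposition \ref{well_known_properties} are needed.
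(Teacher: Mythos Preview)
Your proof is correct. The paper itself does not supply a proof of this lemma but simply cites \cite[Theorem 3.1]{St93}; your direct computation via the chain rule, expanding the Euler--Lagrange equation to isolate $R\Phi_{,11}q'$, and then matching against the quotient-rule expression for $\partial R/\partial q$, is exactly the standard derivation and all the intermediate identities check out. The monotonicity conclusion is drawn correctly from (H1), (H4), and Proposition~\ref{well_known_properties}(1).
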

Combining Proposition \ref{Tasint} and Lemma \ref{detR1} we get now that:
\begin{coro}\label{coro:4}
 For the stored energy function \eqref{ogden-mat}, the determinant function 
corresponding to a cavitation solution, as a function of the circumferential 
strain $v$, satisfies:
\begin{eqnarray}
 &\displaystyle\hspace{-2.25in}
 \left(1+\frac{1}{(n-1)\kappa}\frac{v^{n(n-1)}}{\delta^{n-2}}
\derd{h}(\delta)\right)\td{\delta}{v}=&\nonumber\\
&\hspace{1.25in}\displaystyle
\frac{v^{n(n-2)}}{\delta^{n-2}}
\left[-v^{n-1}-v^{n-1}\sum_{j=1}^{n-2}v^{-j}\left(\frac{\delta}{v^{n-1}} 
\right)^j +(n-1)
\frac{\delta^{n-1}}{v^{(n-1)^2}} \right ] .&\label{detv_ode}
\end{eqnarray}
Moreover, provided $\deru{h}(d)\To-\infty$ as $d\To0^+$, then $\delta(v)\To0^+$ 
as $v\To\infty$.
\end{coro}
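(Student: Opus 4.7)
The plan is to derive the ODE by specializing Lemma \ref{detR1} to the Ogden stored energy \eqref{ogden-mat}, and to establish $\delta(v)\to 0^+$ by combining the monotonicity from that lemma with the asymptotic behaviour of the Cauchy stress given in Proposition \ref{Tasint}.

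For the first assertion I would begin by computing, at the radially symmetric argument $(q,v,\ldots,v)$ with $\delta = qv^{n-1}$,
\[
\Phi_{,1} = \kappa q^{n-1} + v^{n-1}h'(\delta),\quad \Phi_{,2} = \kappa v^{n-1} + qv^{n-2}h'(\delta),\quad \Phi_{,11} = \kappa(n-1)q^{n-2} + v^{2(n-1)}h''(\delta).
\]
The $h'$ contribution cancels in the numerator of $R$, leaving $R(q,v) = \kappa\,\frac{q^n-v^n}{q-v} = \kappa\sum_{k=0}^{n-1}q^k v^{n-1-k}$, and hence $\partial R/\partial q = \kappa\sum_{k=1}^{n-1}kq^{k-1}v^{n-1-k}$. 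I would then factor $\Phi_{,11} = \kappa(n-1)q^{n-2}\bigl(1 + \tfrac{v^{2(n-1)}}{\kappa(n-1)q^{n-2}}h''(\delta)\bigr)$, divide the ODE of Lemma \ref{detR1} through by the prefactor $\kappa(n-1)q^{n-2}/v^{n-1}$, and use $q=\delta/v^{n-1}$ to recognize $v^{2(n-1)}/q^{n-2} = v^{n(n-1)}/\delta^{n-2}$. This immediately produces the LHS of \eqref{detv_ode}. On the right I would apply the telescoping identity
\[
(q-v)\sum_{k=1}^{n-1} k q^{k-1}v^{n-1-k} = -v^{n-1} -\sum_{k=1}^{n-2} q^k v^{n-1-k} + (n-1)q^{n-1},
\]
which follows from a one-line index shift, and once more substitute $q=\delta/v^{n-1}$ to match the bracketed expression in \eqref{detv_ode}. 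The only mildly delicate step, and the one I would carry out most carefully, is the bookkeeping of the powers of $v$ and $\delta$ in the three resulting summands; no conceptual obstacle arises.

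For the asymptotic statement, Lemma \ref{detR1} gives that $\delta(v)$ is monotone decreasing in $v$, so $\delta_\infty := \lim_{v\to\infty}\delta(v)\geq 0$ exists and $\delta$ is bounded on $[\lambda,\infty)$. For \eqref{ogden-mat} the Cauchy stress \eqref{Cauchy_stress} takes the form
\[
T(v) = \frac{\Phi_{,1}(q,v,\ldots,v)}{v^{n-1}} = \kappa\,\frac{\delta^{n-1}}{v^{n(n-1)}} + h'(\delta(v)).
\]
The first summand tends to $0$ because $\delta$ is bounded and $v\to\infty$, whereas Proposition \ref{Tasint} gives $T(v)\to -\infty$; therefore $h'(\delta(v))\to -\infty$. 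Since $h'$ is continuous, hence bounded, on any compact subinterval of $(0,\infty)$, this is possible only if $\delta_\infty = 0$, and by positivity $\delta(v)\to 0^+$. The stipulated assumption $h'(d)\to -\infty$ as $d\to 0^+$ ensures consistency of this limit with the logarithmic blow up of $T$ identified in Proposition \ref{Tasint}.
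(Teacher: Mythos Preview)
Your proposal is correct and follows exactly the route the paper indicates: the corollary is stated as an immediate consequence of ``combining Proposition~\ref{Tasint} and Lemma~\ref{detR1}'', and that is precisely what you do---specialising the ODE of Lemma~\ref{detR1} to \eqref{ogden-mat} for the first part, and using the monotonicity of $\delta$ together with $T(v)\to-\infty$ from Proposition~\ref{Tasint} to force $\delta_\infty=0$ for the second. Your algebraic verifications (the cancellation of $h'$ in $R(q,v)$, the telescoping identity for $(q-v)\partial R/\partial q$, and the substitution $q=\delta/v^{n-1}$) are all sound; the only minor remark is that your closing sentence about the hypothesis $h'(d)\to-\infty$ ensuring ``consistency'' is inessential---your contradiction argument via boundedness of $h'$ on compact subsets of $(0,\infty)$ already yields $\delta_\infty=0$ without invoking that hypothesis.
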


\vspace{0.2in}
We close this section now by establishing conditions under which the term 
in the energy functional \eqref{originalsef}, containing the function 
$h(\cdot)$, is finite for a cavitating 
solution.
\begin{prop}\label{finitehint}
Let the function $h(\cdot)$ in \eqref{ogden-mat} satisfy the inequalities
\begin{equation}\label{growthh2}
 \frac{K_1}{d^{\gamma+2}}\le\derd{h}(d)\le\frac{K_2}{d^{\gamma+2}},\quad 
d\le d_0,
\end{equation}
and
\begin{equation}\label{growthh2a}
\frac{K_1}{d^{\gamma}}\le h(d)\le\frac{K_2}{d^{\gamma}},\quad 
d\le
d_0,
\end{equation}
for some $\gamma>0$ and $d_0>0$. Then the determinant function $\delta(\cdot)$ 
corresponding 
to a cavitating solution satisfies that the 
integral $\int_0^1R^{n-1}h(\delta(R))\,\dif R$ is finite.
\end{prop}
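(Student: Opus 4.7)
The plan is to pin down the asymptotic rate at which $\delta(R)\to 0^+$ as $R\to 0^+$ along a cavitating solution, and then verify that the resulting $h(\delta(R))$ grows at most like a power of $\ln(1/R)$, which is dominated by the factor $R^{n-1}$ in the integrand.

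For the stored energy \eqref{ogden-mat} a direct computation gives $\Phi_{,1}(q,v,\ldots,v)=\kappa q^{n-1}+v^{n-1}\deru{h}(qv^{n-1})$, so with $v=r(R)/R$ the radial Cauchy stress \eqref{Cauchy_stress} can be decomposed as
\[
T(r(R))=\kappa\frac{(\deru{r}(R))^{n-1}}{v^{n-1}}+\deru{h}(\delta(R))=\kappa\frac{\delta(R)^{n-1}}{v^{n(n-1)}}+\deru{h}(\delta(R)).
\]
Corollary~\ref{coro:4} (applicable since \eqref{hcond3} holds for \eqref{ogden-mat}) gives $\delta(R)\to 0^+$ while $v\to\infty$ as $R\to 0^+$, so the first term on the right is $o(1)$. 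Combining this with the logarithmic asymptotic $T(r(R))=(n-1)\kappa\ln R+O(1)$ from Proposition~\ref{Tasint}, I would conclude
\[
\deru{h}(\delta(R))=(n-1)\kappa\ln R+O(1),\qquad R\to 0^+.
\]

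Next, integrating $\derd{h}$ from $d$ up to $d_0$ and applying \eqref{growthh2} yields constants $C_1,C_2>0$ and $d_1\in(0,d_0]$ with
\[
\frac{C_1}{d^{\gamma+1}}\le -\deru{h}(d)\le\frac{C_2}{d^{\gamma+1}},\qquad 0<d\le d_1.
\]
For $R$ small enough that $\delta(R)\le d_1$, combining with the previous display forces
\[
\frac{C_1}{\delta(R)^{\gamma+1}}\le (n-1)\kappa\,\ln(1/R)+O(1)\le\frac{C_2}{\delta(R)^{\gamma+1}},
\]
which gives $\delta(R)\asymp(\ln(1/R))^{-1/(\gamma+1)}$ as $R\to 0^+$. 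The upper bound in \eqref{growthh2a} then produces $h(\delta(R))\le K_2/\delta(R)^{\gamma}\le C(\ln(1/R))^{\gamma/(\gamma+1)}$ for $R$ near $0$.

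Since $n\ge 2$, the function $R\mapsto R^{n-1}(\ln(1/R))^{\gamma/(\gamma+1)}$ is integrable near $R=0$. On any interval $[R_0,1]$ with $R_0>0$ the mapping $R\mapsto h(\delta(R))$ is continuous (Lemma~\ref{detR1} keeps $\delta$ bounded and bounded away from $0$ there), so the full integral $\int_0^1 R^{n-1}h(\delta(R))\,\dif R$ is finite. The main obstacle is the first step: establishing the two-sided asymptotic $\deru{h}(\delta(R))=(n-1)\kappa\ln R+O(1)$. What makes it go through is precisely Corollary~\ref{coro:4}, which guarantees not just that $\delta(R)\to 0$ but that the auxiliary term $\kappa\delta^{n-1}/v^{n(n-1)}$ in the decomposition of $T$ is genuinely $o(1)$, allowing the logarithmic divergence of $T$ from Proposition~\ref{Tasint} to be inherited cleanly by $\deru{h}(\delta)$. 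Once this is in hand, the remainder of the argument is a routine comparison using the pointwise hypotheses \eqref{growthh2}--\eqref{growthh2a}.
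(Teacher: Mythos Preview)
Your proof is correct and takes a genuinely different route from the paper's. The paper works directly with the ODE \eqref{detv_ode} for $\delta$ as a function of the circumferential strain $v=r/R$: it bounds the bracketed polynomial term, uses \eqref{growthh2} to control the factor involving $\derd{h}$, derives two-sided differential inequalities for $\td{\delta}{v}$, and integrates to obtain
\[
C_1\ln v + C_2 \le \frac{1}{\delta^{\gamma+1}} \le C_3\ln v + C_4,
\]
from which integrability follows via \eqref{growthh2a}. You instead bypass the ODE entirely by exploiting the algebraic decomposition $T(r(R))=\kappa\,\delta^{n-1}/v^{n(n-1)}+\deru{h}(\delta)$ and feeding in the logarithmic asymptotic for $T$ already established in Proposition~\ref{Tasint}; this yields $\deru{h}(\delta(R))=(n-1)\kappa\ln R+O(1)$ directly, after which integrating \eqref{growthh2} once converts the statement into the same two-sided estimate $\delta^{-(\gamma+1)}\asymp\ln(1/R)$ (equivalent to the paper's since $\ln v\sim\ln(1/R)$ for a cavitating solution). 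Your argument is shorter and more transparent because it reuses Proposition~\ref{Tasint} rather than re-deriving the logarithmic rate through differential inequalities; the paper's approach, on the other hand, is more self-contained in that it does not rely on the prior Cauchy-stress asymptotics and works purely at the level of the determinant ODE.
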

\begin{proof}
 Since by Corollary \ref{coro:4}, $\delta(v)\To0$ as $v\To\infty$, we have that 
for some $v_0>0$,
\[
 \frac{\delta(v)}{v^{n-1}}<\frac{v}{2},\quad v\ge v_0,
\]
where $\delta(v_0)\le d_0$. Using this we get that
\begin{eqnarray*}
-v^{n-1}-v^{n-1}\sum_{j=1}^{n-2}v^{-j}\left(\frac{\delta}{v^{n-1}} 
\right)^j+(n-1)
\frac{\delta^{n-1}}{v^{(n-1)^2}}&\ge& 
-v^{n-1}-v^{n-1}\sum_{j=1}^{n-2}v^{-j}\left(\frac{v}{2}\right)^j \\
&=&v^{n-1}(-2+2^{2-n}).
\end{eqnarray*}
Similarly we can get that
\[
 -v^{n-1}-v^{n-1}\sum_{j=1}^{n-2}v^{-j}\left(\frac{\delta}{v^{n-1}} 
\right)^j+(n-1)
\frac{\delta^{n-1}}{v^{(n-1)^2}}\le v^{n-1}(-1+(n-1)2^{1-n}).
\]
It follows now from \eqref{growthh2} that
\[
\frac{\kappa(n-1)}{2K_2v^n}\delta^{\gamma+2}\le\frac{v^{n(n-2)}}{\delta^{n-2}} 
\left(1+\frac{1}{(n-1)\kappa}\frac{v^{n(n-1)}}{\delta^{n-2}}
\derd{h}(\delta)\right)^{-1}\le\frac{\kappa(n-1)}{K_1v^n}\delta^{\gamma+2}.
\]
It follows now from \eqref{detv_ode}, and the previous estimates that
\[
 (n-1)\frac{\kappa}{vK_1}(-2+2^{2-n})\delta^{\gamma+2}\le\td{\delta}{v}\le (n-1)
\frac{\kappa}{2vK_2}(-1+(n-1)2^{1-n})\delta^{\gamma+2},
\quad v\ge v_0.
\]
We have now that for some constants $C_i$, $i=1,2,3,4$, with $C_1$ and $C_3$ 
positive, that
\begin{equation}\label{growthhp}
 C_1\ln(v)+C_2\le\frac{1}{\delta^{\gamma+1}}\le C_3\ln(v)+C_4, \quad v\ge v_0.
\end{equation}
The result now follows from this estimate and the hypothesis 
\eqref{growthh2a}.
\end{proof}

\subsubsection*{Asymptotic behaviour of the energy functional}
We next study the rate at which the stored energy of a cavitating equilibrium  
diverges to infinity for stored energy functions of the form \eqref{ogden-mat}. 
We do this using the divergence identity \eqref{div_ident}.
\begin{prop}
Suppose that $\Phi$ is of the form \eqref{ogden-mat} and define the modified 
energy functional by
\begin{equation}\label{modsef1}
\hat I(r):=\lim_{\epsilon\rightarrow 0^+}\left[
\int_{\epsilon}^{1}R^{n-1}\Phi (r(R))\, \dif R 
-\frac{\kappa (n-1)}{n}\, r(\eps)^n\ln 
\left(\frac{r(\eps)}{\eps}\right) 
\right].
\end{equation}
Assume that the function $h(\cdot)$ in \eqref{ogden-mat} satisfies 
that
\begin{equation}\label{const_H1}
 \abs{d\,\deru{h}(cd)}\le K[h(d)+1],\quad\abs{c-1}\le\gamma_0,
\end{equation}
for some positive constants $K,\gamma_0$. Let $r\in C^2((0,1])\cap 
C([0,1])$ be a solution of \eqref{eqn2.13} on $[0,1]$. Then
\begin{enumerate}
\item
if $r(0)>0$ and $\delta (R) := r'(R)\left(\frac{r(R)}{R}\right)^{n-1}$ is 
bounded on $[0,1]$, the modified 
energy $\hat I(r)$ is finite and given by 
\begin{eqnarray}
&\displaystyle\frac 1n 
\left[\Phi(r(1))-r'(1)\Phi_{,1}(r(1)) +\lambda^nT(\lambda)\right]
-\frac{\kappa(n-1)}{n^2}r(0)^n
~~~~~~~~~~~~~~~~~~~~~~~~~&\nonumber\\
&~~~~~~~~~~~~~~~- \displaystyle
\limite{\eps}{0^+}\left[T(r(\eps))+\kappa(n-1)\ln\left(\frac{r(\eps)}{\eps}
\right)\right]\frac{r^n(\eps)}{n}.&\label{energy_cons}
\end{eqnarray}
\item
If $r(0)=0$, then  $\hat I(r) =I(r)$ (possibly infinite). (So that 
$\hat I$ agrees with the unmodified energy functional on non-cavitating 
equilibria).
\end{enumerate}
\end{prop}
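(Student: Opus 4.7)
The central tool will be the divergence identity \eqref{div_ident}. My plan is to integrate it from $R=\epsilon$ to $R=1$, which gives
\[
\int_\epsilon^1 R^{n-1}\Phi(r(R))\,\dif R = \frac{1}{n}\bigl[\Phi(r(1))-r'(1)\Phi_{,1}(r(1))\bigr] + \frac{\lambda^n}{n}T(\lambda) - B(\epsilon),
\]
where the interior boundary term is
\[
B(\epsilon) := \frac{\epsilon^n}{n}\bigl[\Phi(r(\epsilon))-r'(\epsilon)\Phi_{,1}(r(\epsilon))\bigr] + \frac{r(\epsilon)^n}{n}T(r(\epsilon)).
\]
Both parts then reduce to evaluating $\lim_{\epsilon\to 0^+}\!\bigl[-B(\epsilon) - \frac{\kappa(n-1)}{n}r(\epsilon)^n\ln(r(\epsilon)/\epsilon)\bigr]$.

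For Part 1 I specialise to the stored energy \eqref{ogden-mat}. A direct calculation yields the decomposition
\[
\Phi - r'\Phi_{,1} = -\frac{\kappa(n-1)}{n}(r')^n + \frac{\kappa(n-1)}{n}\!\left(\frac{r}{R}\right)^{\!n} + h(\delta) - \delta\,h'(\delta).
\]
Multiplying by $\epsilon^n/n$ and substituting into $B(\epsilon)$ produces three pieces: (i) a ``good'' term $\frac{\kappa(n-1)}{n^2}r(\epsilon)^n$, which converges to $\frac{\kappa(n-1)}{n^2}r(0)^n$; (ii) a vanishing term $-\frac{\kappa(n-1)}{n^2}\epsilon^n r'(\epsilon)^n$, since Proposition~\ref{well_known_properties}(2) yields $r'(\epsilon)\to 0$; and (iii) an error $\frac{\epsilon^n}{n}[h(\delta(\epsilon))-\delta(\epsilon)h'(\delta(\epsilon))]$, which I dispose of using \eqref{const_H1} at $c=1$ to reduce matters to the claim $\epsilon^n h(\delta(\epsilon))\to 0$. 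The remaining piece $-\frac{r(\epsilon)^n}{n}T(r(\epsilon))$ combines with the logarithmic correction into $-\frac{r(\epsilon)^n}{n}\bigl[T(r(\epsilon))+\kappa(n-1)\ln(r(\epsilon)/\epsilon)\bigr]$. Writing this as $-\frac{r(\epsilon)^n}{n}\bigl\{[T(r(\epsilon))-\kappa(n-1)\ln\epsilon]+\kappa(n-1)\ln r(\epsilon)\bigr\}$ and invoking Proposition~\ref{Tasint} together with $r(\epsilon)\to r(0) > 0$ shows this limit exists. Assembling the surviving contributions yields the formula \eqref{energy_cons}.

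For Part 2 ($r(0)=0$) the statement reduces to proving $\frac{\kappa(n-1)}{n}r(\epsilon)^n\ln(r(\epsilon)/\epsilon)\to 0$, since then the bracket in \eqref{modsef1} is just $\int_\epsilon^1 R^{n-1}\Phi\,\dif R$, which converges (possibly to $+\infty$) to $I(r)$ by monotone convergence. Proposition~\ref{well_known_properties}(1) together with $(r/R)' = (r'-r/R)/R$ shows $r(R)/R$ is non-increasing in $R$ and hence admits a (possibly infinite) limit as $R\to 0^+$. If this limit is finite the claim is immediate, since $r(\epsilon)^n\to 0$ while $\ln(r(\epsilon)/\epsilon)$ stays bounded; in the exceptional case $r(\epsilon)/\epsilon\to\infty$ one uses the $C^2$ regularity and the ODE \eqref{eqn2.13} to derive a rate of decay for $r(\epsilon)$ that beats the logarithm.

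The main obstacle is the error estimate $\epsilon^n[h(\delta(\epsilon))-\delta(\epsilon)h'(\delta(\epsilon))]\to 0$ in Part 1. Hypothesis \eqref{const_H1} gives the clean comparison $|\delta h'(\delta)|\leq K(h(\delta)+1)$, but establishing the decay of $\epsilon^n h(\delta(\epsilon))$ requires quantitative information on the rate at which $\delta(\epsilon)\to 0$ as $\epsilon\to 0$ (equivalently as $v=r/R\to\infty$). I would obtain this rate by integrating \eqref{detv_ode} exactly as in the proof of Proposition~\ref{finitehint}, which provides a logarithmic-in-$v$ lower bound on $1/\delta$ and hence an at-most-polynomial-in-$\ln(1/\epsilon)$ upper bound on $h(\delta(\epsilon))$, easily dominated by the polynomial factor $\epsilon^n$.
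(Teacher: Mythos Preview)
Your Part~1 argument follows the same line as the paper's: both integrate the divergence identity \eqref{div_ident} from $\epsilon$ to $1$ and then analyse the boundary term at $R=\epsilon$ piece by piece using the explicit form \eqref{ogden-mat}. The paper packages things slightly differently---it first rewrites the bracket in \eqref{modsef1} as the integral \eqref{functIm} and invokes Proposition~\ref{finitehint} to establish finiteness, and only then applies \eqref{div_ident} to derive \eqref{energy_cons}---but the substance is the same. One simplification worth noting: for the error term $\epsilon^n h(\delta(\epsilon))\to 0$ you do not need the explicit rate from \eqref{growthhp}. Since $\delta$ is monotone in $v$ (Lemma~\ref{detR1}) and $\delta\to 0^+$ (Corollary~\ref{coro:4}), $h(\delta(\cdot))$ is monotone near $0$, and integrability of $R^{n-1}h(\delta(R))$ (Proposition~\ref{finitehint}) then gives $\epsilon^n h(\delta(\epsilon))\le n\int_0^\epsilon R^{n-1}h(\delta(R))\,\dif R\to 0$. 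This is how the paper's terse ``since $R^{n-1}h(\delta(R))$ is integrable'' should be read.

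Your Part~2 has a genuine gap. First, a technical point: you invoke Proposition~\ref{well_known_properties}(1) to get $r'<r/R$, but that proposition assumes $r(0)>0$, so it does not apply here. More seriously, your treatment of the case $r(\epsilon)/\epsilon\to\infty$ is only a promissory note---you assert that a suitable rate of decay for $r(\epsilon)$ can be extracted from the ODE, but give no argument, and a decay like $r(\epsilon)\sim|\ln\epsilon|^{-1/n}$ would \emph{not} beat the logarithm. The paper handles this case quite differently: it shows that it \emph{cannot occur}. Assuming $r/R$ is not constant and $L:=\lim_{R\to 0^+}r(R)/R=\infty$, the mean-value theorem (applied to $r$ with $r(0)=0$) produces a sequence $R_j\to 0^+$ with $r'(R_j)=r(R_j)/R_j\to\infty$ as well; hence $\delta(R_j)\to\infty$ and, from the explicit form $T=\kappa(Rr'/r)^{n-1}+h'(\delta)$ together with \eqref{hcond3}, $T(r(R_j))\to\infty$. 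On the other hand, $r'<r/R$ on $(0,1]$ (cf.\ \cite[Prop.~6.2]{Ba82}) makes $T(r(\cdot))$ strictly increasing, so $T(r(R_j))\le T(r(1))$---a contradiction. Therefore $L<\infty$, the correction term $r(\epsilon)^n\ln(r(\epsilon)/\epsilon)$ vanishes trivially, and $\hat I(r)=I(r)$ follows.
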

\begin{proof}
By the fundamental Theorem of Calculus, it follows that 
\begin{eqnarray*}
&\displaystyle\int_{\epsilon}^{1}R^{n-1}\Phi (r(R))\, \dif R 
-\frac{\kappa (n-1)}{n}\, r(\eps)^n\ln 
\left(\frac{r(\eps)}{\eps}
\right)~~~~~~~~~~~~~~~~~~~~~~~~~~&\\
&=\displaystyle\int_\eps^1 \left[R^{n-1} \Phi(r(R)) +
\frac{\kappa(n-1)}{n}\frac{d}{dR}\left(
 \ln \left(\frac{r}{R}\right)r^n\right)\right]\, dR- \frac{\kappa 
(n-1)}{n}\lambda^n\ln\lambda.&
\end{eqnarray*}
On noting that 
\[
\frac{d}{dR}\left(
\ln \left(\frac{r}{R}\right)r^n\right) =
nr^{n-1}r'\ln \left(\frac rR\right) +\frac{d}{dR}\left(\frac {r^n}n\right) 
-R^{n-1}\left(\frac rR\right)^n,
\]
it follows from \eqref{ogden-mat} and the above that $\hat{I}$ 
is also expressible as 
\begin{eqnarray*}
\hat I(r)&=&\limite{\eps}{0^+} \int_\eps^1  
R^{n-1}\left[\frac{\kappa}{n}\left(r'\right)^n
+h\left(\delta (R)\right)
+\kappa(n-1)\delta(R) \left(\dfrac{1}{n}+\ln\left(\frac{r}{R}\right)\right)
\right]\,\dif R\nonumber\\
&&~~~~~~~~~~~~~~~~~~~~~~
- \frac{\kappa (n-1)}{n}\lambda^n\ln\lambda.
\end{eqnarray*}
By Proposition \ref{finitehint} and equation \eqref{bddbe} below, the 
integrand in this expression is integrable on $(0,1)$ for a 
cavitating solution. Thus the limit as 
$\eps\To0^+$ is finite and equal to
\begin{eqnarray}
\hat I(r)&=&\int_0^1  
R^{n-1}\left[\frac{\kappa}{n}\left(r'\right)^n 
+h\left(\delta (R)\right)
+\kappa(n-1)\delta(R)\left(\dfrac{1}{n}+ \ln\left(\frac{r}{R}\right)\right)
\right]\,\dif R\nonumber\\
&&~~~~~~~~~~~~~~~~~~~~~~- \frac{\kappa 
(n-1)}{n}\lambda^n\ln\lambda.\label{functIm}
\end{eqnarray}

As $r$ is a solution of \eqref{eqn2.13}, it follows from \eqref{div_ident} that
\begin{eqnarray*}
 \int_\eps^1R^{n-1}\Phi(r(R))\,\dif 
R&=&\frac{1}{n}\left[\Phi(r(1)) 
-\deru{r}(1)\Phi_{,1}(r(1)) +\lambda^nT(r(1))\right]\\
&&-\frac{1}{n}\left[\eps^n\left(\Phi(r(\eps)) 
-\deru{r}(\eps)\Phi_{,1}(r(\eps))\right) +r(\eps)^nT(r(\eps))\right]
\end{eqnarray*}
Since $R^{n-1}h(\delta(R))$ is integrable in $(0,1)$ and $\deru{r}$ is bounded, 
it follows that
\[
\limite{\eps}{0^+}\eps^n\Phi(r(\eps))=\limite{\eps}{0^+}\frac{\kappa(n-1)}{n}
r(\eps)^n.
\]
Similarly, this time using \eqref{const_H1}, we obtain
\[
 \limite{\eps}{0^+}\eps^n\deru{r}(\eps)\Phi_{,1}(r(\eps))=0.
\]
The result \eqref{energy_cons} follows from these limits, definition 
\eqref{modsef1}, and Proposition \ref{Tasint}.

For the second part, let $L=\limite{R}{0^+}\frac{r(R)}{R}$ and assume that 
$\frac{r(R)}{R}$ is not constant. If 
$L\in[0,\infty)$, is easy to show that
\[
 \limite{\eps}{0^+}r(\eps)^n\ln\left(\frac{r(\eps)}{\eps}\right)=0.
\]
Thus in this case $\hat{I}(r)=I(r)$. Assume now that $L=\infty$. By Rolle's 
theorem and the continuity of $\deru{r}$ in $(0,1]$, it follows that  
$L=\limite{j}{\infty}\deru{r}(R_j)$ for some sequence 
$R_j\To0^+$. Since
$\deru{r}(R)<\frac{r(R)}{R}$ in $(0,1]$, we have by 
\cite[Proposition 6.2]{Ba82}, that $T(r(R))$ is strictly increasing. But 
$\limite{j}{\infty}\frac{r(R_j)}{R_j}=\limite{j}{\infty}\deru{r}(R_j)=\infty$ 
implies that 
$T(r(R_j))\To\infty$ as $j\To\infty$ which contradicts that $T(r(R))$ is 
strictly 
increasing. Thus $L<\infty$ which completes the proof of the second part.
\end{proof}
Henceforth we shall employ the representation \eqref{functIm} as that of our 
modified functional. For later reference we observe that
\begin{equation}\label{bddbe}
\int_0^1R^{n-1}\delta(R)\ln\left(\frac{r}{R}\right)\,\dif
R=\int_{r(0)}^\lambda u^{n-1}\ln(u)\,\dif u
-\int_0^1R^{n-1}\delta(R)\ln(R)\,\dif R,
\end{equation}
which implies that \eqref{functIm} is bounded below.

\section{Existence of minimizers and the Euler-Lagrange equations for the 
modified
functional}\label{sec:exist-EL}
In this section we show some of the details of the analysis that establishes
the existence of minimizers for the modified functional \eqref{functIm} over
\eqref{alambda} and their characterization via the Euler-Lagrange equations. 
The analysis
is very similar to that in \cite[Section 7]{Ba82} and thus we just highlight
the details concerning the extra or new terms in \eqref{functIm}. In this
respect we mention that the stored energy function corresponding to the
modified functional \eqref{functIm} is given by \eqref{mod_Phi} and does not
correspond to an isotropic material. Thus the results in \cite{Ba82} do not
necessarily apply immediately.
\begin{thm}\label{min_existence}
 Assume that the function $h(\cdot)$ is a nonnegative convex function
satisfying \eqref{hcond}. Then the functional \eqref{functIm} has a minimizer
over the set \eqref{alambda}.
\end{thm}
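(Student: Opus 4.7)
The plan is to apply the direct method of the calculus of variations, adapting \cite[Section~7]{Ba82} to handle the two features of \eqref{functIm} not present in the classical case: the term $\frac{\kappa(n-1)}{n}R^{n-1}\delta$, which is a null Lagrangian since $R^{n-1}\delta=\frac{1}{n}(r^n)'$ integrates to $\frac{1}{n}(\lambda^n-r(0)^n)$ on $\mathcal{A}_\lambda$, and the more delicate $\kappa(n-1)R^{n-1}\delta\ln(r/R)$. The key technical device for the latter is the identity \eqref{bddbe},
\[
\int_0^1 R^{n-1}\delta\ln(r/R)\,\dif R = \int_{r(0)}^{\lambda} u^{n-1}\ln u\,\dif u - \int_0^1 R^{n-1}\delta\ln R\,\dif R,
\]
which trades the non-convex integrand $\delta\ln(r/R)$ for a pointwise function of $r(0)$ plus a linear-in-$\delta$ term with the nonnegative weight $-\ln R$.

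Using this rewriting, $\hat I$ is bounded below on $\mathcal{A}_\lambda$: the boundary integral is bounded uniformly for $r(0)\in[0,\lambda]$, and the remaining pieces are nonnegative. For a minimizing sequence $\{r_k\}\subset\mathcal{A}_\lambda$, the monotonicity of each $r_k$ and the uniform bound $r_k(R)\in[0,\lambda]$ let Helly's selection theorem produce a subsequence converging pointwise a.e.\ to a nondecreasing limit $r$, with $r_k(0)\to r(0)$ and $r(1)=\lambda$. Coercivity from $\int R^{n-1}(r_k')^n\,\dif R$ yields $r_k'\rightharpoonup r'$ weakly in $L^n_{\mathrm{loc}}((0,1],R^{n-1}\dif R)$, and the superlinear growth of $h$ together with $\int R^{n-1}h(\delta_k)\,\dif R\le C$ gives uniform integrability of $\delta_k$ with respect to $R^{n-1}\dif R$ by de la Vall\'ee Poussin, hence weak $L^1$ convergence of a subsequence to some $\bar\delta$. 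Local uniform convergence $r_k\to r$ on $[\epsilon,1]$ (via the Morrey embedding $W^{1,n}\hookrightarrow C^{0,1-1/n}$) then identifies $\bar\delta=r'(r/R)^{n-1}$.

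Lower semicontinuity of each piece now follows. The convex integrals $\int R^{n-1}\frac{\kappa}{n}(r')^n\,\dif R$ and $\int R^{n-1}h(\delta)\,\dif R$ are LSC by Ioffe--Tonelli, and the null-Lagrangian contribution is continuous in $r_k(0)$. For the logarithmic term, the rewriting reduces matters to $\int_0^1 R^{n-1}\delta_k(-\ln R)\,\dif R$; restricting to $[\epsilon,1]$ where the weight is bounded and invoking weak $L^1$ convergence gives
\[
\liminf_k\int_0^1 R^{n-1}\delta_k(-\ln R)\,\dif R \ge \int_\epsilon^1 R^{n-1}\delta(-\ln R)\,\dif R,
\]
after which monotone convergence as $\epsilon\to 0^+$ yields the required inequality. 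Finally, the limit $r$ lies in $\mathcal{A}_\lambda$: $r(1)=\lambda$ and $r(0)\ge 0$ are immediate, and $r'>0$ a.e.\ must hold, because otherwise $\delta=0$ on a set of positive measure and the assumption $\lim_{d\to 0^+}h(d)=\infty$ would force $\hat I(r)=+\infty$, contradicting minimality.

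The chief technical obstacle is the non-convexity in the natural variables of $\delta\ln(r/R)$ together with the unboundedness of the weight $\ln(r/R)$ as $R\to 0$. The identity \eqref{bddbe} is precisely what makes the direct method applicable here: it exchanges this awkward term for a continuous boundary contribution in $r(0)$ plus an integral linear in $\delta$ with a nonnegative continuous weight on $(0,1]$, so that the standard Ioffe--Tonelli / weak compactness machinery suffices.
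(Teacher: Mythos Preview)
Your proof is correct and follows essentially the same route as the paper's: the direct method with de la Vall\'ee Poussin compactness for $(\delta_k)$, the identity \eqref{bddbe} to tame the logarithmic term, and truncation to $(\eps,1)$ followed by monotone convergence as $\eps\to0^+$. The only cosmetic difference is that the paper packages the compactness step via Ball's change of variables $\rho=R^n$, $u_k(\rho)=r_k^n(\rho^{1/n})$ (so that $\dot u_k=\delta_k$) and builds the limit $r$ directly from the weak $L^1$ limit of $(\dot u_k)$, whereas you obtain $r$ by Helly/Morrey and handle the logarithmic term via a liminf argument on the nonnegative weight $-\ln R$ rather than proving exact convergence of that term on $(\eps,1)$.
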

\begin{proof}
 Since the homogeneous deformation $r_h(R)=\lambda R$ belongs to
\eqref{alambda} and $\hat{I}(r_h)<\infty$, this together with $\hat{I}$
bounded below shows that
\[
 \inf_{r\in\mathcal{A}_\lambda}\hat{I}(r)\in\Real.
\]
Let $(r_k)$ be an infimizing sequence. As in \cite{Ba82}, we use the 
change of variables $\rho=R^n$, and set $u_k(\rho)=r_k^n(\rho^{1/n})$. It 
follows now 
that
\[
 \dot{u}_k(\rho)=\td{u_k}{\rho}(\rho)=\delta_{k}(\rho^{1/n}),\quad 
\delta_{k}(R)=\deru{r_k}(R)\left(\frac{r_k(R)}{R}\right)^{n-1}.
\]
From the boundedness of $(\hat{I}(r_k))$ we get that the sequence
\[
 \left(\int_0^1h(\dot{u}_k(\rho))\,\dif \rho\right),
\]
is bounded. It follows now from \eqref{hcond2} and De La Vall\'ee-Poussin 
Criterion that for some subsequence (not relabeld) $(\dot{u}_k)$, we 
have $\dot{u}_k\rightharpoonup w$ in $L^1(0,1)$ for some $w\in L^1(0,1)$ with 
$w>0$ a.e. Letting
\[
 u(\rho)=\lambda^n-\int_\rho^1w(s)\,\dif s,
\]
and $r(R)=u(R^n)^{1/n}$, we get now that $r_k\rightharpoonup 
r$ in $W^{1,1}(\eps,1)$ and that 
$\delta_{k}\rightharpoonup\delta=\deru{r}(r/R)^{n-1}$ in $L^1(\eps,1)$ for any 
$\eps\in(0,1)$. Using \eqref{bddbe} we get that
\[
 \int_\eps^1R^{n-1}\delta_k(R)\ln\left(\frac{r_k}{R}\right)\,\dif
R=\int_{r_k(\eps)}^\lambda u^{n-1}\ln(u)\,\dif u
-\int_\eps^1R^{n-1}\delta_k(R)\ln(R)\,\dif R.
\]
Now using that $r_k(\eps)\To r(\eps)$, 
$\delta_k\rightharpoonup\delta$ in $L^1(\eps,1)$,
and that $R^{n-1}\ln(R)$ is bounded on $(\eps,1)$, we have that
\begin{eqnarray*}
\limite{k}{\infty}\int_\eps^1R^{n-1}\delta_k(R)\ln\left(\frac{r_k}{R}\right)\,
\dif
R&=&\int_{r(\eps)}^\lambda u^{n-1}\ln(u)\,\dif u
-\int_\eps^1R^{n-1}\delta(R)\ln(R)\,\dif R,\\
&=&\int_\eps^1R^{n-1}\delta(R)\ln\left(\frac{r}{R}\right)\,\dif R.
\end{eqnarray*}
This together with the convergence of $(r_k)$ and $(\delta_k)$ already
established, and a weak lower semi--continuity argument shows that
\[
\hat{I}_\eps(r)\le\liminf_k\hat{I}_\eps(r_k)
\le\liminf_k\hat{I}(r_k)
=\inf_{r\in\mathcal{A}_\lambda}\hat{
I } (r),
\]
where $\hat{I}_\eps$ is as in \eqref{functIm} but integrating over $(\eps,1)$. 
We get now from the Monotone Convergence Theorem and the arbitrariness of 
$\eps$ that
\[
\hat{I}(r)\le\liminf_k\hat{I}(r_k)=\inf_{r\in\mathcal{A}_\lambda}\hat{
I } (r),
\]
Since $\lambda=r_k(1)\To r(1)$, we get that 
$r\in\mathcal{A}_\lambda$ and is
therefore a minimizer of $\hat{I}$.
\end{proof}

If we define
\begin{equation}\label{mod_Phi}
 \hat{\Phi}(v_1,\ldots,v_n)=\frac{\kappa}{n}\,v_1^n+h(v_1\cdots v_n)
 +\kappa v_1\cdots v_n\left(\frac{n-1}{n}+\ln(v_2\cdots v_n)\right),
\end{equation}
then (cf. \eqref{functIm})
\[
 \hat{I}(r)=\int_0^1R^{n-1}\hat{\Phi}\left(\deru{r}(R),\frac{r(R)}{R},\ldots,
\frac{r(R)}{R}\right)\,\dif R- \frac{\kappa 
(n-1)}{n}\lambda^n\ln\lambda.
\]
Note that $\hat{\Phi}$ does not correspond to an 
isotropic material as it is not symmetric in its arguments. However we still 
have that $\hat{\Phi}_{,k}(q,t,\ldots,t)=\hat{\Phi}_{,j}(q,t,\ldots,t)$ for 
$2\le k,j\le n$, and that $\hat{\Phi}$ satisfy (H1)--(H4).

With 
$\hat{\Phi}_{,i}(r(R))=\hat{\Phi}_{,i}\left(\deru{r}(R),\frac{r(R)}{R},\ldots,
\frac{r(R)}{R}\right)$, $i=1,2$, we have that
\begin{subequations}\label{mod_Phi_par}
 \begin{eqnarray}
  \hat{\Phi}_{,1}(r(R))
&=&\kappa\deru{r}(R)^{n-1}\nonumber\\
&+&\vs^{n-1}\left[\deru{h}(\delta(R))+(n-1)\kappa\left(\frac{1}{n}+
\ln\vs\right)\right],\label{mod_Phi_par1}\\
~~~\hat{\Phi}_{,2}(r(R))
&=&\deru{r}(R)\vs^{n-2}\bigg[\deru{h}(\delta(R))+\kappa+\kappa(n-1)
\left(\frac{1}{n}+\ln\vs\right)\bigg]
, \label{mod_Phi_par2}
\end{eqnarray}
\end{subequations}
and we define
\begin{eqnarray}
 \hat{T}(r(R))&=&\vs^{1-n}\hat{\Phi}_1(r(R)),\nonumber\\
 &=&\kappa R^{n-1}\left[\frac{\deru{r}(R)}{r(R)}\right]^{n-1}
 +\deru{h}(\delta(R))
 +(n-1)\kappa\left(\frac{1}{n}+\ln\vs\right).\label{mod_CS}
\end{eqnarray}
We call $\hat{T}(r(\cdot))$ the \textit{modified radial Cauchy stress}. The 
techniques in \cite{Ba82} can now be adapted to show the following 
result.
\begin{thm}\label{mod_EL1}
 Let $r$ be any minimizer of $\hat{I}$ over \eqref{alambda}. Assume that the
function $h(\cdot)$ satisfies \eqref{const_H1}. Then $r\in
C^1(0,1]$, $\deru{r}(R)>0$ for all $R\in(0,1]$, $R^{n-1}\hat{\Phi}_1(r(R))$ is
$C^1(0,1]$, and
\begin{equation}\label{mod_EL}
\td{}{R}\left[R^{n-1}\hat{\Phi}_1(r(R))\right]=
(n-1)R^{n-2}\hat{\Phi}_2(r(R)).
\end{equation}
Moreover, if $r(0)>0$, then
\begin{equation}\label{mod_BC_0}
 \limite{R}{0^+}\hat{T}(r(R))=0.
\end{equation}
\end{thm}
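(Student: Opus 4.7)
The plan is to adapt Ball's analysis from \cite[Section 7]{Ba82} to the non-isotropic modified stored energy $\hat{\Phi}$. I first derive the interior Euler--Lagrange equation and regularity of $r$ on $(0,1]$. Taking variations $\varphi\in C_c^\infty(0,1)$, admissibility of $r+t\varphi$ for small $|t|$ is standard, and the first variation condition
\begin{equation*}
\int_0^1 R^{n-1}\left[\hat{\Phi}_{,1}(r(R))\varphi'(R)+(n-1)R^{-1}\hat{\Phi}_{,2}(r(R))\varphi(R)\right]\,\dif R=0
\end{equation*}
holds for all such $\varphi$. Combined with the strict convexity of $\hat{\Phi}$ in $v_1$ (hypothesis H1) and the growth control \eqref{const_H1}, a bootstrapping argument following \cite{Ba82} shows $r\in C^1(0,1]$, that $R^{n-1}\hat{\Phi}_{,1}(r(R))$ is $C^1(0,1]$, and that \eqref{mod_EL} holds pointwise.

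Next I establish $r'(R)>0$ for $R\in(0,1]$. Suppose for contradiction $r'(R_0)=0$ at some $R_0\in(0,1]$; then $\delta(R_0)=0$, and by \eqref{hcond3} we have $\deru{h}(\delta(R_0))=-\infty$. Inspection of \eqref{mod_Phi_par1} shows that the $(r/R)^{n-1}\deru{h}(\delta)$ contribution then forces $\hat{\Phi}_{,1}(r(R_0))=-\infty$, contradicting the continuity of $R^{n-1}\hat{\Phi}_{,1}(r(R))$ on $(0,1]$ just established.

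The key new content is the boundary condition \eqref{mod_BC_0} at $R=0$ for a cavitating minimizer. Here I would use variations $\varphi\in C^1([0,1])$ with $\varphi(0)\neq 0$ and $\varphi(1)=0$. Since $r(0)>0$ and $r'(R)>0$ on $(0,1]$, the perturbations $r+t\varphi$ remain in $\mathcal{A}_\lambda$ for small $|t|$, and $\hat{I}(r+t\varphi)$ is finite by Proposition~\ref{finitehint} and the representation \eqref{functIm}. Differentiating at $t=0$ and integrating by parts using the interior regularity yields
\begin{equation*}
0=\int_0^1\varphi(R)\left[-\td{}{R}\bigl(R^{n-1}\hat{\Phi}_{,1}(r(R))\bigr)+(n-1)R^{n-2}\hat{\Phi}_{,2}(r(R))\right]\,\dif R-\varphi(0)\lim_{R\to0^+}R^{n-1}\hat{\Phi}_{,1}(r(R)).
\end{equation*}
By \eqref{mod_EL} the bracketed integrand vanishes, so arbitrariness of $\varphi(0)$ gives $\lim_{R\to0^+}R^{n-1}\hat{\Phi}_{,1}(r(R))=0$. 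Combined with the identity $R^{n-1}\hat{\Phi}_{,1}(r(R))=r(R)^{n-1}\hat{T}(r(R))$ and $r(0)>0$, this yields \eqref{mod_BC_0}.

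The main obstacle is to justify rigorously that the boundary contribution at $R=0$ can be extracted in the limit: one must verify that $\lim_{R\to0^+}R^{n-1}\hat{\Phi}_{,1}(r(R))$ exists and that the integration by parts is valid up to $R=0$. This requires an a priori bound on $R^{n-2}\hat{\Phi}_{,2}(r(R))$ near the origin, which I would obtain by combining the asymptotic behaviour of $\delta$ and of $r(R)/R$ from Corollary~\ref{coro:4} and Proposition~\ref{Tasint} with the growth condition \eqref{const_H1} to control the $\deru{h}(\delta)$ term in \eqref{mod_Phi_par2}, carefully tracking the logarithmic factor $\ln(r/R)$ that is the signature of the critical exponent $p=n$.
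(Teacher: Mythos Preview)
Your overall strategy matches the paper's: both defer to an adaptation of Ball's arguments in \cite[Section~7]{Ba82}, and you have correctly isolated the new content (the natural boundary condition \eqref{mod_BC_0}) and the role of the hypothesis \eqref{const_H1} in the dominated-convergence step. Two points deserve correction.

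First, your admissibility claim for the boundary variations is false as written. Once the interior Euler--Lagrange equation is established, $r$ is a cavitating solution of \eqref{eqn2.13} and Proposition~\ref{well_known_properties} gives $r'(R)\to 0$ as $R\to 0^+$. Hence for a generic $\varphi\in C^1([0,1])$ with $\varphi'(0)\neq 0$ there is \emph{no} $t\neq 0$ for which $r+t\varphi\in\mathcal{A}_\lambda$, because $r'+t\varphi'$ will change sign near the origin. The standard remedy (as in Ball) is to restrict to test functions that are \emph{constant} on a neighbourhood $[0,\eps_0]$ of the origin: then $(r+t\varphi)'=r'>0$ on $(0,\eps_0]$, while on $[\eps_0,1]$ the positivity and continuity of $r'$ give a uniform lower bound, so $r+t\varphi\in\mathcal{A}_\lambda$ for all small $|t|$. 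This choice also makes $\varphi'\equiv 0$ near $0$, which simplifies the integrability checks in your final paragraph.

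Second, your invocation of Proposition~\ref{finitehint} is misplaced: that result concerns solutions under the extra growth hypotheses \eqref{growthh2}--\eqref{growthh2a}, which are not assumed here. What you actually need is that $\int_0^1 R^{n-1}h(\delta(R))\,\dif R<\infty$, and this follows directly from $\hat I(r)<\infty$ and the representation \eqref{functIm}. Combined with \eqref{const_H1} (which gives $|\deru{h}(\delta)|\le K[h(\delta)+1]/\delta$) and the identity $r'=\delta R^{n-1}/r^{n-1}$, this yields integrability of $R^{n-2}\hat\Phi_{,2}(r(R))$ on $(0,1)$; note in particular that the logarithmic term contributes $r'\ln(r/R)=O(R^{n-1}|\ln R|)$, which is harmless. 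The existence of $\lim_{R\to 0^+}R^{n-1}\hat\Phi_{,1}(r(R))=r(0)^{n-1}\lim_{R\to 0^+}\hat T(r(R))$ then follows from Proposition~\ref{Tasint}, after which your variational argument (with the corrected test functions) forces the limit to vanish.
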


The next two results are rather straightforward to verify but they will 
be quite important for the rest of our development, especially for the phase 
plane analysis of \eqref{mod_EL}.
\begin{thm}\label{thm:3}
 Let $r$ be any minimizer of $\hat{I}$ over \eqref{alambda} and assume that
\eqref{const_H1} holds. Then $r$ is a solution of \eqref{eqn2.13} where $\Phi$
is as in \eqref{ogden-mat}.
\end{thm}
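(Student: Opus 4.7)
The plan is to show that the Lagrangian density for $\hat{I}$ differs from that for $I$ by a null Lagrangian, so that their Euler--Lagrange equations coincide. Given this, the conclusion is immediate: by Theorem \ref{mod_EL1}, a minimizer $r$ satisfies \eqref{mod_EL}, and if \eqref{mod_EL} is the same ODE as \eqref{eqn2.13}, we are done.

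The first step is to identify the generating function. Set
\[
G(R,r) := \frac{\kappa(n-1)}{n}\, r^n \ln\!\left(\frac{r}{R}\right),
\]
and verify by direct computation that, along any smooth positive function $r(R)$,
\[
R^{n-1}\hat{\Phi}\!\left(r'(R),\tfrac{r}{R},\ldots,\tfrac{r}{R}\right)
- R^{n-1}\Phi\!\left(r'(R),\tfrac{r}{R},\ldots,\tfrac{r}{R}\right)
= \frac{d}{dR}\,G(R,r(R)).
\]
This is essentially the chain-rule manipulation already carried out between the two displayed expressions for $\hat I(r)$ in the proof of Proposition 2.3: expanding the left hand side using \eqref{ogden-mat} and \eqref{mod_Phi}, one obtains
$\kappa(n-1)r^{n-1}r'\ln(r/R) + \tfrac{\kappa(n-1)}{n}r^{n-1}r' - \tfrac{\kappa(n-1)}{n}\,r^n/R$,
and differentiating $G$ directly gives the same expression. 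So no new calculation is really needed; it is a bookkeeping exercise collecting the logarithmic term produced by the modification against the ``missing'' $(n-1)(r/R)^n$ term in \eqref{mod_Phi}.

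The second step is the standard null-Lagrangian principle. Since $G$ depends only on $(R,r)$, the Lagrangian $\mathcal L(R,r,r') := G_R(R,r) + G_r(R,r)\,r'$ satisfies
\[
\frac{d}{dR}\!\left(\frac{\partial \mathcal L}{\partial r'}\right) - \frac{\partial \mathcal L}{\partial r}
= \frac{d}{dR}\,G_r(R,r) - G_{Rr}(R,r) - G_{rr}(R,r)\,r' \equiv 0,
\]
so the Euler--Lagrange operator annihilates any such total derivative. Consequently the Euler--Lagrange equations derived from $R^{n-1}\hat{\Phi}$ and from $R^{n-1}\Phi$ are identical; the relation \eqref{mod_EL} thus coincides with \eqref{eqn2.13}, proving the theorem.

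The only point needing a word of care is that $G(R,r)$ is singular at $R=0$ when $r(0)>0$. This is not a genuine obstacle: Theorem \ref{mod_EL1} asserts \eqref{mod_EL} only on $(0,1]$, where $r\in C^1$ with $r>0$, so $G$ is smooth and the null-Lagrangian identity holds pointwise on $(0,1]$. Hence \eqref{eqn2.13} holds throughout $(0,1]$, which is exactly the claim.
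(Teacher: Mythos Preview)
Your proof is correct and is essentially the same as the paper's: both rest on the observation that $R^{n-1}\hat{\Phi}(r(R))-R^{n-1}\Phi(r(R))=\frac{d}{dR}G(R,r(R))$ is a null Lagrangian, so the Euler--Lagrange equations coincide. The only difference is organizational---the paper verifies this by expanding the derivative of the logarithmic term directly inside \eqref{mod_EL} and simplifying to \eqref{eqn2.13}, whereas you invoke the null-Lagrangian principle at the integrand level.
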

\begin{proof}
 We know $r\in C^1(0,1]$. Thus we can expand the following term in
\eqref{mod_EL}:
\begin{eqnarray*}
\td{}{R}\left[R^{n-1}\vs^{n-1}\ln\vs\right]&=&r(R)^{n-2}\deru{r}(R)\left[
1+(n-1)\ln\vs\right]\\&&-R^{n-2}\vs^{n-1}.
\end{eqnarray*}
Substituting this into \eqref{mod_EL} and collecting terms, we get that
\eqref{eqn2.13} holds for $r$.
\end{proof}
\begin{prop}\label{monot_mod_CS}
 Let $r\in C^2(0,1]$ be a solution of \eqref{mod_EL}. Then
$\hat{T}(r(\cdot))\in C^1(0,1]$ and
\begin{equation}\label{mod_CS_der}
 \td{}{R}\hat{T}(r(R))=(n-1)\kappa\,\frac{\deru{r}}{r}\left(1-\frac{
(\deru{r})^{n-1}}{(r/R)^{n-1}}\right).
\end{equation}
In particular, for a cavitating solution $r$, the function $\hat{T}(r(\cdot))$ 
is monotone increasing in $(0,1]$. Moreover, if $r(0)=0$, then $r(R)=\lambda R$ 
for $R\in[0,1]$.
\end{prop}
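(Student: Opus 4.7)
The proof has three ingredients: the derivative formula \eqref{mod_CS_der}, its monotonicity consequence, and the rigidity statement for $r(0)=0$. I will take each in turn.

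\textbf{Step 1: derivation of \eqref{mod_CS_der}.} Starting from \eqref{mod_CS}, I would rewrite the definition as $r(R)^{n-1}\hat T(r(R)) = R^{n-1}\hat\Phi_{,1}(r(R))$, differentiate in $R$, and apply \eqref{mod_EL} on the right-hand side. This yields
\[
r^{n-1}\hat T'(R) = (n-1)R^{n-2}\Bigl[\hat\Phi_{,2}(r(R)) - \tfrac{r'R}{r}\hat\Phi_{,1}(r(R))\Bigr].
\]
Substituting the explicit formulas \eqref{mod_Phi_par1} and \eqref{mod_Phi_par2} for $\hat\Phi_{,1}$ and $\hat\Phi_{,2}$, the terms containing $\deru h(\delta)$ and the terms containing $\kappa(n-1)(1/n+\ln(r/R))$ cancel identically in the combination $\hat\Phi_{,2}-(r'R/r)\hat\Phi_{,1}$, leaving only $\kappa[r'(r/R)^{n-2} - R(r')^n/r]$. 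Dividing by $r^{n-1}$ and rearranging produces \eqref{mod_CS_der}. The $C^1$ regularity of $\hat T(r(\cdot))$ on $(0,1]$ is then immediate from $r\in C^2((0,1])$ together with $r>0$ there.

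\textbf{Step 2: monotonicity for cavitating solutions.} For $r(0)>0$, Proposition \ref{well_known_properties}(1) gives $\deru r(R) < r(R)/R$ on $(0,1]$, so the bracket $1-(\deru r)^{n-1}/(r/R)^{n-1}$ is strictly positive. Since $\deru r/r>0$, formula \eqref{mod_CS_der} yields $\hat T'(R)>0$ on $(0,1]$, proving strict monotonicity.

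\textbf{Step 3: the case $r(0)=0$.} The homogeneous deformation $r_h(R)=\lambda R$ is itself always a solution of \eqref{mod_EL}: by Theorem \ref{thm:3} it suffices that it satisfy \eqref{eqn2.13}, which is trivial because the symmetry of $\Phi$ from \eqref{ogden-mat} gives $\Phi_{,1}(\lambda,\ldots,\lambda) = \Phi_{,2}(\lambda,\ldots,\lambda)$. The content of the claim is therefore uniqueness. My plan is to mirror the strategy at the end of the proof of the proposition preceding Theorem \ref{thm:3}: set $L=\lim_{R\to 0^+}r(R)/R$, and use Rolle's theorem, continuity of $\deru r$ on $(0,1]$, and Ball's Proposition 6.2 of \cite{Ba82} (strict monotonicity of the unmodified Cauchy stress $T$ along any solution with $\deru r\ne r/R$) to exclude $L=\infty$. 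With $L<\infty$ established, \eqref{mod_CS_der} gives continuity of $\hat T(r(\cdot))$ up to $R=0$; combining with the boundary data $r(1)=\lambda$ and the strict sign structure of \eqref{mod_CS_der} then forces $\deru r\equiv r/R$ on $(0,1]$, and integration gives $r(R)=\lambda R$.

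The main obstacle is Step 3: rigorously excluding non-homogeneous $C^2$ solutions with $r(0)=0$ requires careful control of the asymptotics of $\deru r$ and $r/R$ at the singular endpoint $R=0$, since one cannot directly invoke Proposition \ref{well_known_properties}(1) (which assumes $r(0)>0$). By contrast, Steps 1 and 2 reduce to a routine algebraic cancellation using \eqref{mod_EL} and the explicit forms \eqref{mod_Phi_par1}--\eqref{mod_Phi_par2}.
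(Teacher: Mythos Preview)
Your Steps 1 and 2 are correct. For Step 1 you take a different route from the paper: the paper writes $\hat T(r(R)) = T(r(R)) + (n-1)\kappa\bigl(\tfrac{1}{n}+\ln(r/R)\bigr)$, invokes Ball's formula $\td{}{R}T(r(R)) = (n-1)\kappa\, R^{n-1}r^{-n}\bigl((r/R)^n-(r')^n\bigr)$ for the original Cauchy stress, and then adds the derivative of the correction term. Your direct computation from $r^{n-1}\hat T = R^{n-1}\hat\Phi_{,1}$ together with \eqref{mod_EL} and \eqref{mod_Phi_par1}--\eqref{mod_Phi_par2} is equally valid and self-contained; the paper's route is shorter only because the formula for $T'$ is already available.

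Step 3, however, has a genuine gap. Your assertion that ``\eqref{mod_CS_der} gives continuity of $\hat T(r(\cdot))$ up to $R=0$'' once $L<\infty$ is not justified: when $r(0)=0$ the factor $\deru r/r$ in \eqref{mod_CS_der} behaves like $1/R$, so integrability of $\hat T'$ near the origin requires a quantitative rate of vanishing for the bracket $1-(\deru r)^{n-1}/(r/R)^{n-1}$, which you have not established. Even if continuity were granted, the concluding claim that ``the strict sign structure of \eqref{mod_CS_der} then forces $\deru r\equiv r/R$'' is unsupported: strict monotonicity of $\hat T$ by itself yields no contradiction, and you have not identified what the boundary data at $R=1$ is supposed to conflict with. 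The paper avoids these difficulties entirely: it notes (via Theorem \ref{thm:3}) that $r$ also solves the original equation \eqref{eqn2.13}, and then invokes \cite[Thm.~6.6]{Ba82}, whose proof uses hypothesis (H2) to control the sign of $\derd r$ and thereby show that any solution of \eqref{eqn2.13} with $r(0)=0$ is homogeneous. Your outline does not use (H2) at all, which is a sign that a key structural ingredient is missing.
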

\begin{proof}
It follows from \eqref{Cauchy_stress} and \eqref{mod_CS} that
 \[
\hat{T}(r(R))=T(r(R))+(n-1)\kappa\left(\frac{1}{n}+\ln(r/R)\right). 
\]
Moreover, from \cite[Eq. 6.8]{Ba82} we have that for
\eqref{ogden-mat}:
\[
\td{}{R}T(r(R))=(n-1)\kappa\frac{R^{n-1}}{r^n}
\left(\left(\frac{r}{R}\right)^n-(\deru{r})^n\right).
\]
Hence
\begin{eqnarray*}
 \td{}{R}\hat{T}(r(R))&=&\td{}{R}T(r(R))+(n-1)\kappa\td{}{R}\ln(r/R),\\
 &=&(n-1)\kappa\left[\frac{R^{n-1}}{r^n}\left(\left(\frac{r}{R}\right)^n-
(\deru{r})^n\right)+\frac{1}{r}\left(\deru{r}-\frac{r}{R}\right)\right],\\
&=&(n-1)\kappa\frac{\deru{r}}{r}\left(1-\frac{(\deru{r})^{n-1}}
{(r/R)^{n-1}}\right),
\end{eqnarray*}
from which \eqref{mod_CS_der} follows.

The statement for the case in which $r(0)=0$
follows from $r$ being a solution of \eqref{eqn2.13}, assumption H2, and 
arguing as 
in \cite[Thm. 6.6]{Ba82}.
\end{proof}

Corresponding to the function \eqref{mod_Phi} we define
\begin{equation}\label{mod_bi_T}
 \hat{T}(\nu,v)=v^{1-n}\hat{\Phi}_{,1}(\nu,v,\ldots,v)=\kappa\left(
\frac{\nu}{v}\right)^{n-1}\!\!+\deru{h}(\nu v^{n-1})
+(n-1)\kappa\left(\frac{1}{n}+\ln(v)\right).
\end{equation}
For fixed $v>0$ we have that $\hat{T}(\nu,v)\searrow-\infty$ as $\nu\searrow0$
and $\hat{T}(\nu,v)\nearrow\infty$ as $\nu\nearrow\infty$. These together with
$\hat{T}_\nu(\nu,v)>0$ implies that the equation $\hat{T}(\nu,v)=C$ has a
unique solution $\hat{\nu}(C,v)>0$ for any $C\in\Real$. Let
\[
g(v)=\hat{T}(v,v)=\kappa+\deru{h}(v^n)
+(n-1)\kappa\left(\frac{1}{n}+\ln(v)\right).
\]
We note that $g(v)\searrow-\infty$ as $v\searrow0$,
$g(v)\nearrow\infty$ as $v\nearrow\infty$, and $\deru{g}(v)>0$. Thus the
equation $g(v)=C$ has a unique solution $\bar{v}(C)$ for any $C\in\Real$.

We now show that for ``small'' $\lambda$ the minimizers of \eqref{functIm} are
homogeneous, i.e., equal to $\lambda R$, and for $\lambda$ sufficiently large
they must be cavitating, i.e., with $r(0)>0$. The proof of the following 
proposition is an adaptation of the one in \cite{Ba82}, to the stored energy 
function \eqref{mod_Phi}.
\begin{prop}
 Let $r$ be any minimizer of $\hat{I}$ over \eqref{alambda} and assume that
\eqref{hcond} and \eqref{const_H1} hold. Then
\begin{enumerate}
 \item
 for $\lambda<\bar{\lambda}$ we must have that $r(R)=\lambda R$ where
$\bar{\lambda}$ is the solution of $\hat{T}(\bar{\lambda},\bar{\lambda})
=0$.
\item
For $\lambda$ sufficiently large we must have that $r(0)>0$.
\end{enumerate}
\end{prop}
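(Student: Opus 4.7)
The plan is to treat the two assertions in turn.

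For (1), my strategy is a contradiction argument via the modified radial Cauchy stress evaluated at $R=1$. Suppose for contradiction that a minimizer $r$ has $r(0) > 0$. Theorem \ref{mod_EL1} then gives $r \in C^1(0,1]$ satisfying \eqref{mod_EL} together with the inner boundary condition $\lim_{R\to 0^+}\hat T(r(R)) = 0$. Proposition \ref{monot_mod_CS} renders $\hat T(r(R))$ strictly increasing on $(0,1]$, so $\hat T(r'(1), \lambda) > 0$. But Proposition \ref{well_known_properties}(1) gives $r'(1) < \lambda$, and the strict monotonicity $\hat T_\nu(\nu, v) > 0$ (immediate from \eqref{mod_bi_T} together with $\derd{h}>0$) yields $\hat T(r'(1), \lambda) < \hat T(\lambda, \lambda) = g(\lambda)$, so $g(\lambda) > 0$. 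Since $g$ is strictly increasing with $g(\bar\lambda) = 0$, this contradicts $\lambda < \bar\lambda$. As Theorem \ref{min_existence} produces a minimizer, it must satisfy $r(0) = 0$, and the final clause of Proposition \ref{monot_mod_CS} identifies it as $r(R) = \lambda R$.

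For (2), the same clause of Proposition \ref{monot_mod_CS} reduces the problem to showing that $r_h(R) := \lambda R$ ceases to be a minimizer once $\lambda$ is sufficiently large. The plan is to exhibit, for each such $\lambda$, a cheaper competitor drawn from the constant-determinant trial family
\[
  r_c(R) := \bigl(c^n + (\lambda^n - c^n) R^n\bigr)^{1/n},\qquad c \in (0,\lambda),
\]
which lies in $\mathcal A_\lambda$ and has $\delta_c(R) \equiv \lambda^n - c^n$. Direct evaluation gives $\hat I(r_h) = \frac{\kappa \lambda^n}{n} + \frac{1}{n} h(\lambda^n)$; the substitution $u = r_c^n$ renders $\int_0^1 R^{n-1}(r_c')^n\,dR$ elementary, and inserting these into \eqref{functIm} and expanding for small $c$ yields
\[
  \hat I(r_c) - \hat I(r_h) = -\frac{c^n}{n}\bigl[\,g(\lambda) + \Gamma_n\,\bigr] + o(c^n),
\]
where $\Gamma_n$ is a constant depending only on $\kappa$ and $n$. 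Since $g(\lambda) \to \infty$ as $\lambda \to \infty$, the bracket is positive beyond some threshold $\lambda^\ast$; for $\lambda > \lambda^\ast$, choosing $c > 0$ small enough forces $\hat I(r_c) < \hat I(r_h)$, proving that $r_h$ is not a minimizer and hence that any minimizer must satisfy $r(0) > 0$.

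The main obstacle is the bookkeeping of logarithmic contributions in the expansion for (2). Both $\frac{\kappa}{n}\int_0^1 R^{n-1}(r_c')^n\,dR$ and the explicit term $-\frac{\kappa(n-1)}{n} c^n \ln c$ in \eqref{functIm} generate $O(c^n \ln c)$ pieces that individually blow up as $c \to 0^+$; verifying that these singular contributions cancel exactly, leaving a remainder that recombines with the Taylor contribution $-h'(\lambda^n) c^n/n$ and the residual constants to reproduce the precise expression $g(\lambda) + \Gamma_n$ inside the bracket, is where the actual work lies. Once that cancellation is secured, the conclusion is immediate from $g(\lambda) \to \infty$.
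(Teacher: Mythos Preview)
Your argument for (1) is correct and is essentially the paper's argument, with the cosmetic difference that you evaluate the modified Cauchy stress at $R=1$ (where $r/R=\lambda$) whereas the paper evaluates it at an intermediate point $R_0$ with $r(R_0)/R_0=\bar\lambda$; both routes feed the same inequality $\hat T_\nu>0$ together with $r'<r/R$ into the monotonicity of $\hat T(r(\cdot))$ and the boundary condition $\hat T(r(0^+))=0$ to reach a contradiction.

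For (2) you use the same constant--determinant trial family as the paper, but in a genuinely different asymptotic regime. The paper fixes the parameter $d\in(0,1)$ (equivalently, a cavity of size $\lambda(1-d)^{1/n}$), bounds $h(d\lambda^n)-h(\lambda^n)$ by convexity, and observes that for fixed $d$ the quantity $(\hat I(u)-\hat I(\lambda R))/\lambda^n$ contains the term $\frac{(n-1)\kappa}{n}(d-1)\ln\lambda$, which drives the whole expression negative as $\lambda\to\infty$; no expansion is needed. You instead fix $\lambda$ and send $c\to0$, which requires the Taylor bookkeeping you describe. Your anticipated cancellation does occur: writing $\int_0^1 R^{n-1}(r_c')^n\,dR=\frac1n\int_{c^n}^{\lambda^n}(1-c^n/u)^{n-1}\,du$ and expanding the (finite) binomial produces a $+\frac{\kappa(n-1)}{n}c^n\ln c$ contribution that exactly offsets the $-\frac{\kappa(n-1)}{n}c^n\ln c$ coming from the $\ln(r_c/R)$ term in \eqref{functIm} (not an ``explicit'' term there, but arising from that integrand via \eqref{bddbe}); the surviving $c^n$--coefficient is indeed $-\frac1n g(\lambda)$ plus a constant in $n,\kappa$. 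One wording quibble: the $O(c^n\ln c)$ pieces do not ``blow up'' as $c\to0^+$ --- they tend to zero --- the point is that they are not $o(c^n)$. The trade--off is that the paper's route is shorter and uses only convexity of $h$, while yours, at the cost of the expansion, yields an explicit threshold $g(\lambda)>-\Gamma_n$ for cavitation to lower the energy.
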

\begin{proof}
 That $\bar{\lambda}$ exists and is unique follows from our previous comments.
Let $\lambda<\bar{\lambda}$ and $r$ be the corresponding minimizer of
\eqref{functIm} over $\mc{A}_\lambda$. Assume that $r(0)>0$. Then since
$r(1)=\lambda$ and $r(R)/R\To\infty$ as $R\searrow0$, we have that
$r(R_0)/R_0=\bar{\lambda}$ for some $R_0\in(0,1)$. Since $\hat{T}_\nu>0$ and
$\deru{r}(R_0)<r(R_0)/R_0=\bar{\lambda}$, we have that
\[
 0=\hat{T}(\bar{\lambda},\bar{\lambda})>\hat{T}(\deru{r}(R_0),\bar{\lambda}) =
 \hat{T}(r(R_0)).
\]
But from \eqref{mod_CS_der} we have that $\hat{T}(r(\cdot))$ is increasing and
since $\limite{R}{0}\hat{T}(r(R))=0$ we must have $\hat{T}(r(R))\ge0$ for
$R\in(0,1]$
which contradicts the above inequality. Hence $r(0)=0$ and from the last 
part of Proposition \ref{monot_mod_CS} we get that $r(R)=\lambda R$.

For the second part of the proof, we define $\hat{r}(R)=\sqrt[n]{dR^n+1-d}$
where $d\in(0,1)$. Is easy to check that $\deru{\hat{r}}(\hat{r}/R)^{n-1}=d$.
If we let $u(R)=\lambda \hat{r}(R)$, then $u\in\mc{A}_\lambda$. It 
follows now that
\begin{eqnarray*}
 \frac{\hat{I}(u)-\hat{I}(\lambda R)}{\lambda^n}&=&\int_0^1R^{n-1}\Big[
\frac{\kappa}{n}((\deru{\hat{r}})^n-1)+(n-1)\kappa(d-1)\ln(\lambda)\\
&&+\frac{n-1}{n}\,\kappa(d-1)+(n-1)\kappa d\ln(\hat{r}/R)+ 
(h(d\lambda^n)-h(\lambda^n))/\lambda^n\Big]\,
\dif R.
\end{eqnarray*}
Since $h(\cdot)$ is convex we get that $h(\lambda^n)\ge h(d\lambda^n)+(1-d)
\lambda^n\deru{h}(d\lambda^n)$ which implies
\[
 \frac{h(d\lambda^n)-h(\lambda^n)}{\lambda^n}\le(d-1)\deru{h}(d\lambda^n).
\]
Thus
\begin{eqnarray*}
 \frac{\hat{I}(u)-\hat{I}(\lambda R)}{\lambda^n}&\le&\int_0^1R^{n-1}\Big[
\frac{\kappa}{n}((\deru{\hat{r}})^n-1)+(n-1)\kappa(d-1)\ln(\lambda)\\
&&+\frac{n-1}{n}\,\kappa(d-1)+(n-1)\kappa 
d\ln(\hat{r}/R)+(d-1)\deru{h}(d\lambda^n)\Big]\,
\dif R.
\end{eqnarray*}
Since $d\in(0,1)$, the right hand side of this inequality is negative for
$\lambda$ large enough. Thus $\hat{I}(u)<\hat{I}(\lambda R)$ for $\lambda$
large enough. Hence the minimizer $r$ must have $r(0)>0$.
\end{proof}

If we let $\omega=R/r(R)$, then
\[
 \td{\omega}{R}=\dfrac{1-\omega\deru{r}}{r}.
\]
We now express the modified Cauchy stress $\hat{T}(r(R))$ as a function of 
$\omega$. In
reference to \eqref{mod_bi_T} we have that the equation
$\hat{T}(\deru{r},\omega^{-1})=T$ has a unique solution
$\deru{r}= \hat{\nu}(T,\omega^{-1})$. Moreover, the function
$\hat{\nu}$, as a function of $(T,\omega)$, can be extended to a bounded 
function for
$(T,\omega)\in[0,T_0]\times[0,\omega_0]$ for some $T_0>0$ and $\omega_0>0$. Also
\[
\pd{\hat{\nu}}{T}=\dfrac{\omega^{n-1}}{(n-1)\kappa\hat{\nu}^{n-1}\omega^{2(n-1)}
+\derd{h}(\hat{\nu}\omega^{-(n-1)})},
\]
which can also be extended to a bounded function in 
$[0,T_0]\times[0,\omega_0]$. Using Proposition
\ref{monot_mod_CS} we now get that $\hat{T}(\omega)$ is a solution of the
initial value problem
\begin{equation}\label{IVP_T}
\left\{\begin{array}{rcl}\displaystyle\td{\hat{T}}{\omega}(\omega)
&=&(n-1)\kappa
\sum_{k=0}^{n-2}\omega^k\hat{\nu}(\hat{T}(\omega),\omega^{-1})^{k+1},\\&&\\
        \hat{T}(0)&=&0.
       \end{array}\right.
\end{equation}
By the boundedness properties quoted above, the solution of this initial value
problem exists and is unique. Using this, the existence of a critical boundary 
displacement $\lambda_c$ can be established and the uniqueness of solutions 
for $\lambda>\lambda_c$ follows from a rescaling argument. The details of the 
previous argument leading to the initial value problem \eqref{IVP_T}, as 
well as the proof of the following proposition, are as in \cite{Ba82}.
\begin{prop}\label{lambda_crit}
Let $r_c$ be a cavitating solution of \eqref{mod_EL} satisfying 
\eqref{mod_BC_0},
and assume
that $\hat{\Phi}_{,1}(1,1,\ldots,1)=0$. Then $r_c$ can be
extended into $(0,\infty)$ with
\[
\deru{r_c}(R)<\frac{r_c(R)}{R},\quad R\in(0,\infty).
\]
Moreover, the function $r_c$ so extended is unique (does not depend on 
$r(1)$)
and
there exists $\lambda_c>1$ such that
\[
\lambda_c=\limite{R}{\infty}\deru{r_c}(R)=\limite{R}{\infty}\frac{r_c(R)}{R}.
\]
If $r_\lambda$ is a solution of \eqref{mod_EL} satisfying 
\eqref{mod_BC_0} and $r(1)=\lambda$ with $\lambda>\lambda_c$, then 
$r_\lambda(R)=r_c(\alpha R)/\alpha$ where $\alpha$ is the unique solution of 
$r_c(\alpha)/\alpha=\lambda$.
\end{prop}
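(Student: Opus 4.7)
The plan is to adapt Ball's phase-plane analysis \cite[Theorems 7.5--7.9]{Ba82} to the modified setting, working in the coordinates $(\omega,\hat T)$ with $\omega = R/r$. By Proposition~\ref{monot_mod_CS} the modified Cauchy stress $\hat T(r(R))$ is strictly increasing in $R$, while Proposition~\ref{well_known_properties} gives $\omega \to 0^+$ as $R \to 0$ (since $r(0)>0$ and $r/R \to \infty$), so the cavity condition \eqref{mod_BC_0} transforms into $\hat T(0)=0$, the initial condition of the autonomous IVP \eqref{IVP_T}. The bounded extensions of $\hat\nu$ and $\partial\hat\nu/\partial T$ on $[0,T_0]\times[0,\omega_0]$ recalled just above \eqref{IVP_T} then deliver a unique local solution $\hat T(\omega)$ by standard ODE theory.

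Next I would analyze the global geometry of the trajectory. The rest states of the $R$-flow $d\omega/dR=\omega(1-\omega\hat\nu)/R$ are those with $\omega\hat\nu=1$, that is, $r'(R)=r(R)/R$; in phase space they trace the curve $\hat T = g(1/\omega)$. Since $g$ is strictly increasing with $g(v)\to -\infty$ as $v\to 0^+$ and $g(v)\to +\infty$ as $v\to\infty$, and the hypothesis $\hat\Phi_{,1}(1,\ldots,1)=0$ is precisely $g(1)=0$, this rest curve decreases monotonically from $+\infty$ at $\omega=0^+$ through the point $(1,0)$ and into $\hat T<0$ for $\omega>1$. The trajectory $(\omega,\hat T(\omega))$ starts at the origin, lies strictly below the rest curve there, and has $d\hat T/d\omega>0$ throughout; hence by the intermediate value theorem it must meet the rest curve at some $\omega_c\in(0,1)$, because it remains strictly positive for $\omega>0$ while the rest curve falls through $0$ at $\omega=1$. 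In $R$-coordinates this intersection is attained only asymptotically as $R\to\infty$ because $d\omega/dR$ vanishes on the rest curve. Setting $\lambda_c:=1/\omega_c>1$ and reconstructing $r_c(R)$ from $r_c'(R)=\hat\nu(\hat T(\omega),1/\omega)$ and $r_c=R/\omega$ gives a solution defined on all of $(0,\infty)$ with $r_c'(R)<r_c(R)/R$ everywhere and $\lim_{R\to\infty} r_c(R)/R = \lim_{R\to\infty} r_c'(R) = \lambda_c$.

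For the scaling and uniqueness assertions I would note that equation \eqref{eqn2.13} (equivalently \eqref{mod_EL} by Theorem~\ref{thm:3}) together with the boundary condition \eqref{mod_BC_0} are invariant under $r(R)\mapsto r(\alpha R)/\alpha$ for any $\alpha>0$, since $\hat T$ depends only on the scale-invariant quantities $r'$ and $r/R$. Hence each rescaled function $r_c(\alpha R)/\alpha$ is another solution of \eqref{mod_EL}--\eqref{mod_BC_0}, and uniqueness of the phase-plane IVP \eqref{IVP_T} forces it to coincide with $r_c$ as a trajectory in $(\omega,\hat T)$ coordinates; this is exactly the uniqueness of the extended $r_c$ up to rescaling. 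Finally, since $r_c(R)/R$ decreases continuously from $+\infty$ at $R=0$ to $\lambda_c$ as $R\to\infty$, for each $\lambda>\lambda_c$ there is a unique $\alpha>0$ with $r_c(\alpha)/\alpha=\lambda$, and then $r_\lambda(R):=r_c(\alpha R)/\alpha$ is the required solution with $r_\lambda(1)=\lambda$; uniqueness of $r_\lambda$ reduces to that of the IVP.

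The main obstacle will be the phase-plane topology argument: verifying \emph{a priori} that the trajectory remains in the region where $\hat\nu$ is controlled so that the $R$-flow extends for all $R\in(0,\infty)$, and confirming that the intersection with the rest curve is approached only as $R\to\infty$ rather than in finite $R$. Once these bounds and the monotonicity of $g$ are secured, the rest of the argument runs parallel to the phase-plane analysis in \cite{Ba82} with $\hat T$ replacing the original Cauchy stress throughout.
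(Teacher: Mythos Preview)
Your proposal is correct and follows essentially the same route as the paper: the paper itself does not give a detailed proof but states that ``the proof of the following proposition [is] as in \cite{Ba82}'', after setting up precisely the autonomous IVP \eqref{IVP_T} in the $(\omega,\hat T)$ variables and noting that uniqueness for $\lambda>\lambda_c$ follows from a rescaling argument. Your outline---uniqueness of the IVP \eqref{IVP_T}, the monotone rest curve $\hat T=g(1/\omega)$ crossing zero at $\omega=1$ (from $\hat\Phi_{,1}(1,\ldots,1)=0$), the asymptotic approach to $\omega_c\in(0,1)$ as $R\to\infty$, and the scale invariance $r\mapsto r(\alpha R)/\alpha$---is exactly Ball's phase-plane argument transplanted to the modified Cauchy stress, and you have correctly flagged the one place (global extension in $R$ and asymptotic, not finite-$R$, approach to the rest curve) where care is needed.
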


It follows now that
\[
\limite{R}{\infty}\hat{T}(r_c(R))=\lambda_c^{1-n}\hat{\Phi}_{,1}(\lambda_c,
\lambda_c,\ldots,\lambda_c).
\]
Combining this with Proposition \ref{monot_mod_CS} we get that
\begin{equation}\label{lamb_crit_eqn}
\lambda_c^{1-n}\hat{\Phi}_{,1}(\lambda_c,\lambda_c,\ldots,\lambda_c)= 
(n-1)\kappa\,\int_0^\infty
\frac{\deru{r_c}(R)}{r_c(R)}\left(1-\frac{(\deru{r_c}(R))^{n-1}}{(r_c(R)/R)^{n-1
}}\right)\,\dif R.
\end{equation}
\section{Approximations by punctured balls}\label{sec:punct}
We now consider the problem over the punctured ball:
\[
 \mc{B}_\eps=\set{\ts{x}\in\Real^n\,:\,\eps<\abs{\ts{x}}<1},
\]
with $\eps\in(0,1)$. Thus we look at the problem of minimizing
\begin{equation}\label{functIm_eps}
\hat{I}_\eps(r)=\int_\eps^1R^{n-1}\hat{\Phi}
\left(\deru{r}(R),\frac{r(R)}{R},\ldots,
\frac{r(R)}{R}\right)\,\dif R- \frac{\kappa 
(n-1)}{n}\,\lambda^n\ln\lambda,
\end{equation}
over the set
\begin{equation}\label{alambda_eps}
\mathcal{A}_\lambda^\eps=\set{r\in
W^{1,1}(\eps,1)\,:\,r(1)=\lambda,\,\deru{r}(R)>0\mbox{~a.e.
for~}R\in(\eps,1),\, r(\eps)\ge0}.
\end{equation}
To state our next result we shall need the following lemma.
\begin{lem}
 Let $\bar{\lambda}=1$ be the unique solution of $\hat{\Phi}_{,1}(\bar{\lambda},
\bar{\lambda},\ldots,\bar{\lambda})=0$. Then 
\[
 \hat{\Phi}(v_1,v_2,\ldots,v_n)>\hat{\Phi}(1,1,\ldots,1),
\]
whenever $v_i\ne1$ for some $i$.
\end{lem}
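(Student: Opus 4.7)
The approach is to exploit a hidden two-variable structure of $\hat\Phi$ and prove strict convexity after a change of variables. Inspection of \eqref{mod_Phi} shows that $\hat\Phi$ depends on the tail $(v_2,\ldots,v_n)$ only through the product $p:=v_2\cdots v_n$, so I can write $\hat\Phi(v_1,v_2,\ldots,v_n)=F(v_1,p)$ for a function $F:(0,\infty)^2\to\mathbb{R}$, and the claim reduces to strict minimality of $(1,1)$ for $F$ on $(0,\infty)^2$.

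Passing to $(v_1,d)$ with $d:=v_1p$ (the determinant) and using $\ln p=\ln d-\ln v_1$, $F$ becomes
\[
\tilde F(v_1,d)=\frac{\kappa}{n}v_1^n+h(d)+\kappa d\!\left(\frac{n-1}{n}+\ln d\right)-\kappa d\ln v_1 .
\]
The Hessian of $\tilde F$ has positive diagonal entries $\kappa(n-1)v_1^{n-2}+\kappa d/v_1^2$ and $h''(d)+\kappa/d$, off-diagonal entry $-\kappa/v_1$, and determinant
\[
\kappa(n-1)v_1^{n-2}h''(d)+\frac{\kappa^2(n-1)v_1^{n-2}}{d}+\frac{\kappa d\,h''(d)}{v_1^{2}},
\]
which is strictly positive on $(0,\infty)^2$ by $h''>0$ (from \eqref{hcond1}) and $\kappa>0$. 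So $\tilde F$ is strictly convex on the open first quadrant.

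The normalization $\hat\Phi_{,1}(1,\ldots,1)=0$ pins down $h'(1)=-\kappa(2n-1)/n$; substituting this value, a line of algebra gives $\tilde F_{v_1}(1,1)=\tilde F_{d}(1,1)=0$. Since a strictly convex $C^1$ function with a critical point achieves its unique global minimum there, $\tilde F(v_1,d)>\tilde F(1,1)$ whenever $(v_1,d)\neq (1,1)$. Pulling back through the map $(v_1,\ldots,v_n)\mapsto(v_1,\,v_1v_2\cdots v_n)$ then gives $\hat\Phi(v_1,\ldots,v_n)>\hat\Phi(1,\ldots,1)$ whenever $(v_1,\,v_1v_2\cdots v_n)\neq(1,1)$.

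The main obstacle is the final pullback: the product map $(v_1,\ldots,v_n)\mapsto(v_1,\,v_1v_2\cdots v_n)$ has $(n-2)$-dimensional fibers over $(0,\infty)^2$, so the 2-variable strict minimality upgrades to a strict inequality in $(0,\infty)^n$ except possibly on the degenerate fiber $\{v_1=1,\ v_2\cdots v_n=1\}$ through the reference state. On the radial slice $v_2=\cdots=v_n$, which is the only situation in which the lemma is applied in the phase plane analysis following Proposition \ref{lambda_crit}, this fiber collapses to the single point $(1,\ldots,1)$, so the condition "$v_i\neq 1$ for some $i$" is equivalent to $(v_1,\,v_1v_2\cdots v_n)\neq(1,1)$ and the inequality is strict as stated.
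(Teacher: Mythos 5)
Your proof is correct, but it takes a genuinely different route from the paper's. Both arguments start from the same reduction, namely that $\hat{\Phi}$ depends on $(v_1,\ldots,v_n)$ only through the pair $(v_1,\,v_2\cdots v_n)$, but they diverge immediately afterwards. The paper keeps the variables $(x,y)=(v_1,\,v_2\cdots v_n)$, writes $\hat{\Phi}=g(x,y)$, solves the critical-point system explicitly (this is where the uniqueness hypothesis on $\bar{\lambda}$ is used: the system reduces to $y=x^{n-1}$ together with $\hat{\Phi}_{,1}(x,\ldots,x)=0$, so $(1,1)$ is the only critical point), verifies the second-order conditions at $(1,1)$, and then upgrades the local minimum to a global one via the coercivity of $g$ at the boundary of the quadrant combined with the uniqueness of the critical point. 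You instead pass to $(v_1,d)$ with $d$ the determinant, in which coordinates the function becomes globally strictly convex; your Hessian computation is correct, and positivity of its determinant uses only $\derd{h}>0$ from \eqref{hcond1} and $\kappa>0$. A strictly convex $C^1$ function on a convex open set with a critical point attains its strict global minimum there, so you need neither the coercivity argument nor, in fact, the uniqueness part of the hypothesis on $\bar{\lambda}$, which comes out as a by-product. This is a real simplification, and it is worth noting that $g(x,y)$ itself is not convex in $(x,y)$ (the term $h(xy)$ spoils it), so the change of variables is what makes the convexity visible. Your closing remark about the degenerate fiber is also a legitimate catch rather than a defect of your argument: as literally stated the strict inequality fails at points such as $(1,2,\tfrac{1}{2})$ for $n=3$, where some $v_i\ne 1$ yet $(v_1,\,v_2\cdots v_n)=(1,1)$; the paper's own proof has exactly the same gap, since it too only yields strictness when $(v_1,\,v_2\cdots v_n)\ne(1,1)$, and your resolution --- that the lemma is only invoked on the radial slice $v_2=\cdots=v_n$, where the offending fiber collapses to the single point $(1,\ldots,1)$ --- is the correct way to read the statement.
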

\begin{proof}
 From \eqref{mod_Phi} we have that
 \[
  \hat{\Phi}(v_1,v_2,\ldots,v_n)=g(v_1,v_2\cdots v_n),
 \]
where
\[
 g(x,y)=\frac{\kappa}{n}\,x^n+h(xy)
 +\kappa xy((n-1)/n+\ln(y)),\quad x>0,\,\,y>0.
\]
The critical points of $g$ are given by the solutions of the system
\[
 \left\{\begin{array}{rcl}\kappa x^{n-1}+y\deru{h}(xy)+\kappa 
y((n-1)/n+\ln(y))&=&0,\\
         x\deru{h}(xy)+\kappa x(1+(n-1)/n+\ln(y))&=&0.
        \end{array}\right.
\]
This system has a unique solution given by the equations
\[
 y=x^{n-1},\quad \deru{h}(xy)=-\kappa(1+(n-1)/n+\ln(y)).
\]
The condition $\bar{\lambda}=1$ is the only 
solution of 
$\hat{\Phi}_{,1}(\bar{\lambda},\bar{\lambda},\ldots,\bar{\lambda})=0$ implies 
that the only solution 
of these equations is $x=y=1$. Moreover since $g_{xx}(x,y)>0$ and
\[
 g_{xx}(1,1)g_{yy}(1,1)-g_{xy}(1,1)^2>0,
\]
we have that $(1,1)$ is a strict local minimum for $g$. Since $g(x,y)\To\infty$ 
as any 
of its arguments tend to zero or infinity, this minimum is global. Thus 
whenever 
$v_i\ne1$ for some $i$, we have
\[
\hat{\Phi}(v_1,v_2,\ldots,v_n)=g(v_1,v_2\cdots v_n)>g(1,1)=\hat{\Phi}(1,1,\ldots
,1).
 \]
\end{proof}
With slight modifications of the proofs of Theorems \ref{min_existence} and
\ref{mod_EL1}, we get that the following result
holds for minimizers of \eqref{functIm_eps} over \eqref{alambda_eps}. (See also 
\cite{Si86a}.)
\begin{thm}\label{min_existence_EL_eps}
Let $\bar{\lambda}=1$ be the unique solution of $\hat{\Phi}_{,1}(\bar{\lambda},
\bar{\lambda},\ldots,\bar{\lambda})=0$. Then the functional 
$\hat{I}_\eps$ has a unique global minimizer over the set
$\mathcal{A}_\lambda^\eps$. Moreover, there exists a
$\delta(\eps)>0$ such that if $r_\eps$ is a global minimizer with
$\lambda\in(1-\delta(\eps),\infty)$, then $r_\eps\in
C^2([\eps,1])$ is a solution of \eqref{mod_EL} over $(\eps,1)$,
and satisfies:
\begin{enumerate}
\item
$\deru{r_\eps}(R)>0$ for $R\in[\eps,1]$,
\item
$r_\eps(\eps)>0$,
\item
$\hat{T}(r_\eps(\eps))=0$.
\end{enumerate}
\end{thm}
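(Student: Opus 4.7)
The plan is to adapt the arguments of Theorem \ref{min_existence} and Theorem \ref{mod_EL1} to the punctured interval $(\eps,1)$, where the removal of the origin eliminates the singularity at $R=0$ but introduces a free endpoint at $R=\eps$ (since $\mathcal{A}_\lambda^\eps$ imposes only $r(\eps)\ge 0$); this free endpoint is what produces the natural boundary condition in item 3.

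\emph{Existence.} I would first note that $r_h(R)=\lambda R\in\mathcal{A}_\lambda^\eps$ has $\hat{I}_\eps(r_h)<\infty$, while the lemma just proved, combined with the finite constant $-\tfrac{\kappa(n-1)}{n}\lambda^n\ln\lambda$, bounds $\hat{I}_\eps$ from below on $\mathcal{A}_\lambda^\eps$. Take an infimizing sequence $(r_k)$ and perform the change of variables $\rho=R^n$, $u_k(\rho)=r_k^n(\rho^{1/n})$ as in Theorem \ref{min_existence}. Hypothesis \eqref{hcond2} together with the de la Vall\'ee--Poussin criterion yields $\dot u_k\rightharpoonup w$ in $L^1(\eps^n,1)$ along a subsequence; defining $u(\rho)=\lambda^n-\int_\rho^1 w(s)\,\dif s$ and $r(R)=u(R^n)^{1/n}$ gives a candidate $r\in\mathcal{A}_\lambda^\eps$. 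Because the domain is bounded away from the origin, the delicate $\eps\to 0^+$ step of Theorem \ref{min_existence} is not needed, and weak lower semicontinuity of each constituent term of $\hat{I}_\eps$ (using convexity of $h$, the identity \eqref{bddbe} for the $\delta\ln(r/R)$ piece, and the standard convex-integrand result for the $R^{n-1}(r')^n$ piece) immediately yields $\hat{I}_\eps(r)\le\liminf_k \hat{I}_\eps(r_k)$, so $r$ is a minimizer.

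\emph{Euler--Lagrange equation, regularity, and items 1--3.} Following the proof of Theorem \ref{mod_EL1}, a first-variation argument with test functions $\phi\in C^\infty([\eps,1])$ vanishing only at $R=1$ yields simultaneously the interior equation \eqref{mod_EL} and the natural boundary condition $R^{n-1}\hat{\Phi}_{,1}(r_\eps(R))\big|_{R=\eps}=0$, which by \eqref{mod_CS} is equivalent to $\hat{T}(r_\eps(\eps))=0$. Hypothesis (H1) allows us to solve \eqref{mod_EL} algebraically for $\derd{r}_\eps$, and a standard bootstrap gives $r_\eps\in C^2([\eps,1])$; the positivity $\deru{r}_\eps(R)>0$ for $R\in[\eps,1]$ then follows from the strict monotonicity of $\hat{T}(r_\eps(\cdot))$ in Proposition \ref{monot_mod_CS} together with the value $\hat{T}(r_\eps(\eps))=0$.

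\emph{Positivity at $\eps$, uniqueness, and the main obstacle.} The role of $\delta(\eps)$ is to guarantee $r_\eps(\eps)>0$, so that the natural boundary condition lies strictly inside the constraint set. For $\lambda=1$ the lemma forces the unique minimizer to be $r\equiv R$, for which $r(\eps)=\eps>0$; a continuous-dependence argument extends this positivity to $\lambda$ in a neighborhood $(1-\delta(\eps),1+\delta(\eps))$, and for $\lambda\ge 1$ it is preserved by combining Proposition \ref{well_known_properties}(1) with the blow-up \eqref{hcond3} of $\deru{h}$ at $0^+$. Uniqueness of the minimizer is then obtained as in the phase-plane argument leading to \eqref{IVP_T}: the Euler--Lagrange system together with the conditions $\hat{T}(r_\eps(\eps))=0$ and $r_\eps(1)=\lambda$ reduces, after the change of variable $\omega=R/r$, to a well-posed initial value problem, so at most one solution exists on $[\eps,1]$. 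I expect the main obstacle to be the uniform exclusion of the degenerate case $r_\eps(\eps)=0$ for all $\lambda>1-\delta(\eps)$: since $\hat{\Phi}$ is \emph{not} isotropic, one cannot invoke the arguments of \cite{Ba82} and \cite{Si86a} verbatim but must rederive them for the specific structure \eqref{mod_Phi}, using the divergence of $\deru{h}$ at $0$ in \eqref{hcond3} (which makes $r(\eps)=0$ infinitely costly) together with the monotonicity of $\hat{T}$ to rule out collapse of the inner boundary.
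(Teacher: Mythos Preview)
Your outline matches the paper's approach: the paper gives no self-contained proof but states that the result follows ``with slight modifications of the proofs of Theorems~\ref{min_existence} and~\ref{mod_EL1}'' together with \cite{Si86a}, and your existence and Euler--Lagrange/natural-boundary-condition steps are precisely those modifications.

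There is, however, a genuine gap in your uniqueness argument. The reduction to the initial value problem~\eqref{IVP_T} works on the \emph{full} ball because the natural boundary condition $\hat T=0$ is imposed at $\omega=R/r=0$; on the punctured ball the inner boundary sits at $R=\eps>0$, so the corresponding value $\omega=\eps/r_\eps(\eps)$ is \emph{a priori unknown}, and what you have is a two-point boundary value problem, not an IVP. Uniqueness in \cite{Si86a} is obtained instead via a phase-plane/time-map monotonicity argument for the autonomous equation~\eqref{tran-de}, and that is what you should invoke. Note also that the theorem asserts uniqueness of the global minimizer for \emph{every} $\lambda$, whereas your argument (even if repaired) only covers the range $\lambda\in(1-\delta(\eps),\infty)$ in which the natural boundary condition is available; for smaller $\lambda$ the constraint $r(\eps)\ge 0$ may be active and uniqueness must be handled separately. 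Two smaller points: Proposition~\ref{well_known_properties}(1) is stated for cavitating solutions on the full ball and is not directly applicable on $[\eps,1]$; and it is the blow-up of $h$ in~\eqref{hcond2}, not of $\deru{h}$ in~\eqref{hcond3}, that makes $r_\eps(\eps)=0$ energetically unfavourable.
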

We also have (see \cite{Si86a}):
\begin{prop}\label{ep_convergence}
Let $r_\eps$ be the unique global minimizer of $\hat{I}_\eps$ over
$\mathcal{A}_\lambda^\eps$ and let $\lambda_{c}$ be as in
Proposition \ref{lambda_crit}. Then
\begin{enumerate}
\item
for $\lambda\le\lambda_{c}$, we have that
\[
\limite{\eps}{0}\,\sup_{R\in[\eps,1]}\abs{r_\eps(R)-\lambda R}=0,
\]
\item
if $\lambda>\lambda_{c}$, then we have that
\[
\limite{\eps}{0}\,\sup_{R\in[\eps,1]}\abs{r_\eps(R)-r_{\lambda}(R)}=0,
\]
where $r_{\lambda}$ is the cavitating minimizer of $\hat{I}$ over $A_\lambda$.
\end{enumerate}
\end{prop}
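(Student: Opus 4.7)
The plan combines an energy/lower-semicontinuity argument (identifying the weak subsequential limit) with $C^2$ regularity of the Euler--Lagrange equations (upgrading to uniform convergence). Since $\lambda R\in\mathcal{A}_\lambda^\eps$, we have the a priori bound $\hat I_\eps(r_\eps)\le \hat I_\eps(\lambda R)\to \hat I(\lambda R)<\infty$. Coercivity of $\hat I$ (via \eqref{hcond2}, De La Vall\'ee-Poussin, and the rewriting \eqref{bddbe}, exactly as in the proof of Theorem \ref{min_existence}) yields weak relative compactness of $\{r_\eps\}$ in $W^{1,1}(\delta,1)$ for each $\delta\in(0,1)$. Extract a subsequence $r_{\eps_k}\rightharpoonup r^*$ on every $(\delta,1)$; weak lower semicontinuity together with monotone convergence as $\delta\to 0^+$ shows $r^*\in\mathcal{A}_\lambda$ is a minimizer of $\hat I$.

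Next I would identify $r^*$ uniquely. For $\lambda>\lambda_c$, Proposition \ref{lambda_crit} says the only cavitating minimizer is $r_\lambda(R)=r_c(\alpha R)/\alpha$ with $r_c(\alpha)/\alpha=\lambda$. For $\lambda\le\lambda_c$, the proposition immediately preceding Proposition \ref{lambda_crit} gives $r^*(R)=\lambda R$ directly when $\lambda<\bar\lambda$; for $\bar\lambda\le\lambda\le\lambda_c$, Proposition \ref{lambda_crit} rules out cavitating solutions meeting $r^*(1)=\lambda$, forcing $r^*(0)=0$, and the last clause of Proposition \ref{monot_mod_CS} then yields $r^*(R)=\lambda R$. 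Since $r^*$ is determined uniquely, the full sequence $r_\eps$ converges weakly to it.

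To upgrade to uniform convergence, I would use that Theorem \ref{min_existence_EL_eps} provides $r_\eps\in C^2([\eps,1])$ solving \eqref{mod_EL}, with $r'_\eps$ uniformly bounded on each $[\delta,1]$ via the ODE and the uniform control on $\hat T$. Arzel\`a--Ascoli then gives uniform convergence on $[\delta,1]$. For the remaining sliver $[\eps,\delta]$, the monotonicity $r'_\eps>0$ together with the continuity of $r^*$ at $R=0$ (for $\lambda>\lambda_c$, $r^*(0)=r_\lambda(0)>0$; for $\lambda\le\lambda_c$, $r^*(0)=0$) controls $\sup_{[\eps,\delta]}|r_\eps-r^*|$ in terms of $r_\eps(\delta)$ and $r^*(\delta)$, and letting $\delta\to 0^+$ after $\eps\to 0^+$ completes the proof in both cases.

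The main obstacle will be the case $\lambda\le\lambda_c$: the limit $\lambda R$ does not satisfy the inner boundary condition $\hat T(r_\eps(\eps))=0$, so $r_\eps$ must develop a boundary layer near $R=\eps$ in which $\hat T$ climbs from $0$ to roughly $\hat T(\lambda,\lambda)$. Showing that this layer contracts to the single point $R=0$ as $\eps\to 0$, and therefore does not spoil the uniform convergence, is the delicate step; it is controlled by the phase-plane IVP \eqref{IVP_T} and continuous dependence on the initial datum $(\omega_\eps,0)$, together with monotonicity of $\hat T$ along solutions (Proposition \ref{monot_mod_CS}) and the uniform energy bound that prevents $r_\eps$ from deviating from $\lambda R$ on a set of positive measure.
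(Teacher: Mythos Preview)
The paper does not give a self-contained proof; it cites \cite{Si86a}, noting that since the Euler--Lagrange equation for $\hat I$ coincides with \eqref{eqn2.13} (Theorem~\ref{thm:3}), the phase-plane analysis of that reference applies directly. That argument works entirely at the ODE level: one passes to the autonomous system \eqref{tran-de}, uses the time map and continuous dependence on initial data, together with the monotonicity of $\hat T$ along trajectories. Your variational/compactness scheme for identifying the limit is a genuinely different route and is sound in outline, but the upgrade to uniform convergence has a gap in the case $\lambda>\lambda_c$.

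For $R\in[\eps,\delta]$, monotonicity of $r_\eps$ and $r_\lambda$ gives only
\[
-\big(r_\lambda(\delta)-r_\eps(\eps)\big)\ \le\ r_\eps(R)-r_\lambda(R)\ \le\ r_\eps(\delta)-r_\lambda(0),
\]
so the left-hand bound involves $r_\eps(\eps)$, not merely $r_\eps(\delta)$ and $r_\lambda(\delta)$ as you claim. The upper bound indeed tends to $r_\lambda(\delta)-r_\lambda(0)\to 0$, but controlling the lower bound requires $\liminf_{\eps\to0}r_\eps(\eps)\ge r_\lambda(0)$, which your argument does not supply (monotonicity gives only the \emph{upper} estimate $\limsup r_\eps(\eps)\le r_\lambda(0)$). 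Establishing this lower bound is precisely where the phase-plane structure is needed: both $r_\eps$ and $r_\lambda$ solve \eqref{eqn2.13} with $r(1)=\lambda$ and satisfy $\hat T=0$ at their inner endpoints, and uniqueness for the IVP \eqref{IVP_T} together with continuous dependence is what forces $r_\eps(\eps)\to r_\lambda(0)$. You rightly invoke exactly this mechanism for the boundary layer when $\lambda\le\lambda_c$; it is equally indispensable when $\lambda>\lambda_c$. A secondary point: your lower-semicontinuity step must cope with the varying domains $[\eps,1]$, and since the integrand of $\hat I_\eps$ contains the sign-indefinite term $\kappa(n-1)\delta(R)\ln(r/R)$, simply discarding the contribution on $[\eps,\delta]$ is not free; one should first extend $r_\eps$ to $[0,1]$ with asymptotically negligible energy before applying the lsc argument of Theorem~\ref{min_existence} on all of $\mathcal A_\lambda$.
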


We recall (cf. \cite{Si86a}) that the change of variables
\begin{equation}\label{cvar2}
\me^s=R,\quad v(s)=\frac{r(R)}{R},
\end{equation}
transforms equation \eqref{mod_EL} into the autonomous equation:
\begin{eqnarray}
\td{}{s}\hat{\Phi}_{,1}(\dot{v}(s)+v(s),v(s),\ldots,v(s))&=&
(n-1)\left(\hat{\Phi}_{,2}(\dot{
v } (s)+v(s),v(s),\ldots,v(s))\right.\nonumber\\&&-
\left.\hat{\Phi}_{,1}(\dot{v}(s)+v(s),v(s),\ldots,v(s))\right),\label{tran-de}
\end{eqnarray}
where $\dot{v}(s)=\dif v(s)/\dif s$. Now a phase plane analysis of this 
equation 
in the $(v,\dot{v})$ plane, based on the time map \cite[Eq. (2.19)]{Si86a}, 
the monotonicity of the Cauchy stress $\hat{T}(r(\cdot))$ along solutions (cf. 
Proposition \ref{monot_mod_CS}), and the continuous dependence on initial data 
for solutions of \eqref{tran-de}, shows that the following results concerning 
the convergence of the strains corresponding to the solutions $r_\eps$ in 
Proposition \ref{ep_convergence} hold (here $v_\eps$ is the solution of 
\eqref{tran-de} corresponding to $r_\eps$):
\begin{enumerate}
 \item
 For $\lambda>\lambda_{c}$, the strains $(\dot{v}_\eps+v_\eps,v_\eps)$ 
converge as $\eps\To0$ to the strains 
$(\dot{v}_{\lambda}+v_{\lambda},v_{\lambda})$ corresponding to the cavitating 
solution $r_{\lambda}$.
\item
For $\bar{\lambda}<\lambda<\lambda_{c}$, the strains 
$(\dot{v}_\eps+v_\eps,v_\eps)$
converge as $\eps\To0$ to the strains corresponding to the nonhomogeneous 
solution
$(v,\dot{v})$ emanating from $(\lambda,\lambda)$ and with $\dot{v}<0$. The
convergence is such that $(\dot{v}_\eps,v_\eps)$ spends most of the time 
(in the sense of \cite[Eq. (2.19)]{Si86a}) close to $(\lambda,\lambda)$ than to 
the rest of the curve corresponding to the boundary condition
$\hat{T}(r_\eps(\eps))=0$. Thus the strains $(\deru{r}_\eps,r_\eps/R)$ develop a
sharp boundary layer close to $R=\eps$ while away from this point they each tend
to $\lambda$.
\item
For $\lambda<\bar{\lambda}$, we have the same conclusions as in 2) above but 
with
$\dot{v}>0$, i.e., with $\deru{r}_\eps>r_\eps/s$.
\end{enumerate}

\section{Numerical results}\label{sec:num}
In this section we present some numerical results that highlight the 
convergence results in Section \ref{sec:punct} over punctured balls. 
We employ two numerical schemes: a descent method based on a 
gradient flow iteration (cf. \cite{Neu1997}) for the minimization of a discrete 
version of \eqref{functIm_eps}; and a shooting method (from $R=1$ to 
$R=\eps$) to solve the boundary value problem for \eqref{mod_EL} over 
$(\eps,1)$ with boundary conditions $\hat{T}(r_\eps(\eps))=0$ and 
$r_\eps(1)=\lambda$. The gradient flow iteration works as a 
\textit{predictor} for the shooting method which in turn plays the role of a 
\textit{corrector}. The use of 
adaptive ode solvers in the shooting method allows for a more precise 
computation of the equilibrium states, especially near $R=\eps$ where the 
strains corresponding to the punctured ball solutions tend to 
develop sharp boundary layers.
\begin{example}
For the stored energy function \eqref{mod_Phi} (or \eqref{ogden-mat}), we take
\[
h(d)=C\,d^{\gamma}+D\,d^{-\delta},
\]
where $C,D\ge0$ and $\gamma,\delta>0$. The reference configuration
is stress free, that is 
$\hat{\Phi}_{,1}(1,\ldots,1)=\hat{\Phi}_{,2}(1,\ldots,1)=0$, 
provided:
\[
D=\dfrac{(1+\frac{n-1}{n})\kappa+C\gamma}{\delta}.
\]
For the computations we used the following values for the different parameters:
\[
 n=3,\quad\kappa=1,\quad C=1,\quad\gamma=2,\quad\delta=2.
\]
For these values, the critical boundary displacement is  
$\lambda_c\approx1.0258$ (cf. \cite{NeSi2009}). For $\eps=0.3, 
0.2, 10^{-4}$ and $\lambda=1.05$ 
(case $\lambda>\lambda_c$) we show in Figure \ref{fig:1} the 
computed solutions 
$r_\eps$ and the modified Cauchy stress functions $\hat{T}(r_\eps(\cdot))$, the 
former 
converging very nicely to a cavitating 
solution while the later to a well defined increasing function vanishing at 
$R=0$. The cavity size for the computed solution with $\eps=10^{-4}$ is 
$0.44184$ approximately with modified energy of $1.2774$. The affine 
deformation in this case has energy of $1.2888$. 

For $\lambda=1.01$ which corresponds to the 
case $\bar{\lambda}<\lambda<\lambda_c$, as $\bar{\lambda}=1$, we show in Figure 
\ref{fig:4} the computed $r_\eps$ and $\hat{T}(r_\eps(\cdot))$. The convergence 
is now to the affine deformation $r_h(R)=1.01R$ with energy of $1.2733$. The 
corresponding Cauchy stress functions show sharp boundary layers at $R=\eps$ 
while converging pointwise to a positive constant function.

The other calculation we show is for 
$\lambda=0.95$ (case $\lambda<\bar{\lambda}=1$) with the same values of $\eps$. 
The results are presented in Figure \ref{fig:3} where we can clearly see the 
convergence of the $r_\eps$ to the affine deformation $r_h(R)=0.95R$ 
with energy of $1.3625$ (Figure \ref{fig:rc3}). The $r_\eps$'s in this figure 
are concave functions corresponding to the case where $\dot{v}>0$ in 
\eqref{tran-de}. On the other hand in Figure \ref{fig:Tc} we see the 
corresponding Cauchy stress functions converging pointwise, with a sharp 
boundary layer at $R=\eps$, to a negative constant function.
\end{example}
\begin{example}
 In this example we study the so called \textit{incompressible limit}
by considering a sequence of compressible problems formally approaching an 
incompressible one. In particular we consider functions $h(\cdot)$ in 
\eqref{mod_Phi} given by
\[
h(d)=C\,\left(d-1-\frac{1}{C}\right)^{2}+D\,d^{-\delta},
\]
where $C,D\ge0$ and $\delta>0$. As $C\To\infty$ we formally approach the 
incompressible modified stored energy function given by:
\[
 \hat{\Phi}^{inc}(v_2,\ldots,v_n)=\frac{\kappa}{n}\,(v_2\cdots v_n)^{-n}+D
 +\kappa \left(\frac{n-1}{n}+\ln(v_2\cdots v_n)\right),
\]
For the computations we used the following:
\[
 n=3,\quad\kappa=3,\quad D=1.5,\quad\delta=2,
\]
with $\lambda=1.05$. In Figure \ref{fig:5} we show in solid the solution of the 
incompressible problem which is given by $r_{inc}(R)=\sqrt[3]{R^3+ 
\lambda^3-1}$, together with the computed minimizers of the modified 
compressible problems \eqref{functIm_eps} with $\eps=0.005$ and $C=20,40$ 
(dashed and dotted respectively), which are clearly seen getting close to 
$r_{inc}$. We computed as well solutions of the modified compressible problems 
for additional values of $C$ together with their modified energies. The results 
are shown in Table \ref{tab:1}. The energy of $r_{inc}$, computed using 
$\hat{\Phi}^{inc}$ above, is given approximately by $1.53013$. Thus we see as 
well a nice convergence of the energies of the modified compressible problems 
in 
the incompressible limit.
\end{example}
\begin{table}[tbhp]
{\footnotesize
\caption{Energies for the modified compressible problems in 
the incompressible limit case.}\label{tab:1}
\begin{center}
\begin{tabular}{|c|c||c|c|}\hline
$C$&$\hat{I}_{\eps}(r_\eps)$&$C$&$\hat{I}_{\eps}(r_\eps)$\\\hline\hline
20&1.52298&160&1.52864\\\hline
40&1.52532&320&1.52936\\\hline
80&1.52735&640&1.52974\\\hline
\end{tabular}
\end{center}
}
\end{table}

\section{Concluding Remarks}
It is not difficult to check that the results of this paper can be generalized to stored energy functions of the form
\begin{equation}\label{SEnormn}
 W(\ts{F})=\frac{\kappa}{n}\norm{\ts{F}}^n+h(\det\ts{F})
 =\frac{\kappa}{n}(v_1^2+\cdots+v_n^2)^{\frac{n}{2}}+h(v_1\cdots v_n).
\end{equation}
In fact, an analysis for this stored energy function similar to the one leading 
to Proposition \ref{Tasint}, shows that $T(v)$ is now asymptotic to 
$-\kappa(n-1)^{n/2}\ln(v)$ as $v\To\infty$. Thus we are led to consider a 
modified functional of the form
\begin{eqnarray*}
\hat I(r)&=&\int_0^1  
R^{n-1}\Bigg[\frac{\kappa}{n}\left(\left[\deru{r}(R)^2+(n-1)\left(\frac{r}{R}\right)^2\right]^{\frac{n}{2}}- 
(n-1)^{\frac{n}{2}}\left(\frac{r}{R}\right)^n\right)\\&&+h\left(\delta 
(R)\right)
+\kappa(n-1)^{\frac{n}{2}}\delta(R)\left(\frac{1}{n}+ 
\ln\left(\frac{r}{R}\right)\right)
\Bigg]\,\dif R
-\frac{\kappa(n-1)^{\frac{n}{2}}}{n}\lambda^n\ln\lambda.
\end{eqnarray*}
As this functional can be characterised in terms of the original one plus 
suitable null Lagrangians, its Euler--Lagrange equation coincides with that of 
the original functional. The rest of the analysis in this paper should now follow through.

The radial incompressible case can be treated similarly to the 
compressible case studied in this paper. However, the incompressible case is 
more straightforward since a radial incompressible deformation of the form 
\eqref{eqn2.8} which also satisfies \eqref{bcond} is necessarily given 
by 
$$
r(R)=(R^n+(\lambda ^3 -1))^{\frac 1n},
$$
for $\lambda >1$.
On using this form, \cite[Prop. 5.1]{Ba82} shows that 
\begin{eqnarray*}
 &-\int_\lambda^b\frac{1}{v^n-1}\td{}{v}\Phi(v^{1-n},v,\ldots,v)\,\dif v+
 n\int_\lambda^b\frac{v^{n-1}}{(v^n-1)^2}\Phi(v^{1-n},v,\ldots,v)\,\dif v&\\&
 ~~~~~~~~~~~~~~~~~~~~~~~~~~~~~~~~~~~= 
\frac{1}{\lambda^n-1}\Phi(\lambda^{1-n},\lambda,\ldots,\lambda),&
\end{eqnarray*}
for any $b>\lambda$ (the case $b$ finite corresponds to integrating over a 
punctured ball in the reference configuration of internal radius $\left( 
\frac{\lambda^n-1}{b^n-1}\right)^{\frac 1n}$. As $b\To\infty$ (corresponding to 
the puncture closing up), the first term on the left of this 
equation is, up to a constant, the  
radial Cauchy stress (on the deformed puncture surface) whilst the second term is the $n$ times the energy of the deformed punctured ball. Taking the form of $\Phi$ in this 
incompressible case as $(\kappa/n)\sum_{i=1}^nv_i^n$ plus some 
constant, it is easy to obtain from the expression above that the growth in the 
radial Cauchy 
stress is once again asymptotically proportional to $\ln(b)$ as $b\To\infty$.

In generalising the techniques in this paper from radially symmetric 
deformations to none radial ones, one approach (cf. \cite{SiSpTi2006}) is to  
restrict attention to deformations for which the distributional determinant of 
the deformation satisfies:
$$
\mbox{Det}(\nabla \bu)=(\det\nabla\bu) \mathcal{L}^n +V_{\ts{u}}\delta_{\ts{0}},
$$
where $\delta_{\ts{0}}$ is the Dirac measure supported at the origin and 
$V_{\ts{u}}$ is the volume of the cavity formed by the deformation 
$\bu$ at the origin. From \cite[Proposition 3.6]{SiSp2010b} we get that in 
the case $n=3$,
\begin{equation}\label{Ineq}
 \int_{\B_\eps}\norm{\nabla\ts{u}}^3\,\dif\ts{x}\ge 
\int_{\B_\eps}\norm{\nabla\ts{u}^{\mbox{rad}}}^3\,\dif\ts{x}\ge 
-2^{\frac{3}{2}}\,\omega_3\,
\tilde{r}^3(\eps)\ln(\eps),
\end{equation}
where $\omega_3=4\pi$. Here $\ts{u}^{\mbox{rad}}$ is the radial symmetrization 
of $\ts{u}$ and is 
given by \eqref{eqn2.8} where $r$ is replaced by $\tilde{r}$ which in turn is 
given by
\[ 
\frac{4\pi}{3}\,\tilde{r}^3(R)=\frac{4\pi}{3}\,\lambda^3-\int_{\B_R}
\det(\nabla\ts
{ u } )\, \dif\ts{x}.
\]
(The inequality \eqref{Ineq} holds provided $\deru{\tilde{r}}(R)\le 
\tilde{r}(R)/R$ for all $R\in[\eps,1]$. If this condition is not satisfied, then 
the symmetrisation $\tilde{r}$ has to be modified as in  \cite{SiSpTi2006} in 
order for \eqref{Ineq} to hold.) Thus, it should 
follow from 
\eqref{Ineq} that the total energy due to the deformation $\ts{u}$ blows up 
at least like $-\ln(\eps)$ as $\eps\To0^+$ if $V_{\ts{u}}>0$. Thus, in 
generalizing 
our results to the non--radial case with the stored energy function 
\eqref{SEnormn}, we are led to consider a modified 
energy functional given by
\[
\hat{E}(\ts{u})=\lim_{\eps\rightarrow 0}\left[\int_{\B_\eps} W(\nabla 
\bu)\,\dif\ts{x} 
+2^{\frac{3}{2}}\kappa\,V_{\bu} \ln (\eps)\right].
\]
It may now follow from the approach in \cite{SiSp2010b}  
that, for each $\eps>0$, the minimizer of the functional in 
brackets above (over $\B_\eps$) must be radial. Under 
suitable hypotheses, it may then follow that the minimizer of $\hat{E}$ is 
radial and so the results of the current paper would then be applicable. We 
shall 
pursue these ideas elsewhere.
\pagebreak

\pagebreak
\begin{figure}
  \begin{tabular}{cc}
  \subfloat[$r_\eps$]
{\label{fig:rc1}\includegraphics[width=0.5\textwidth]{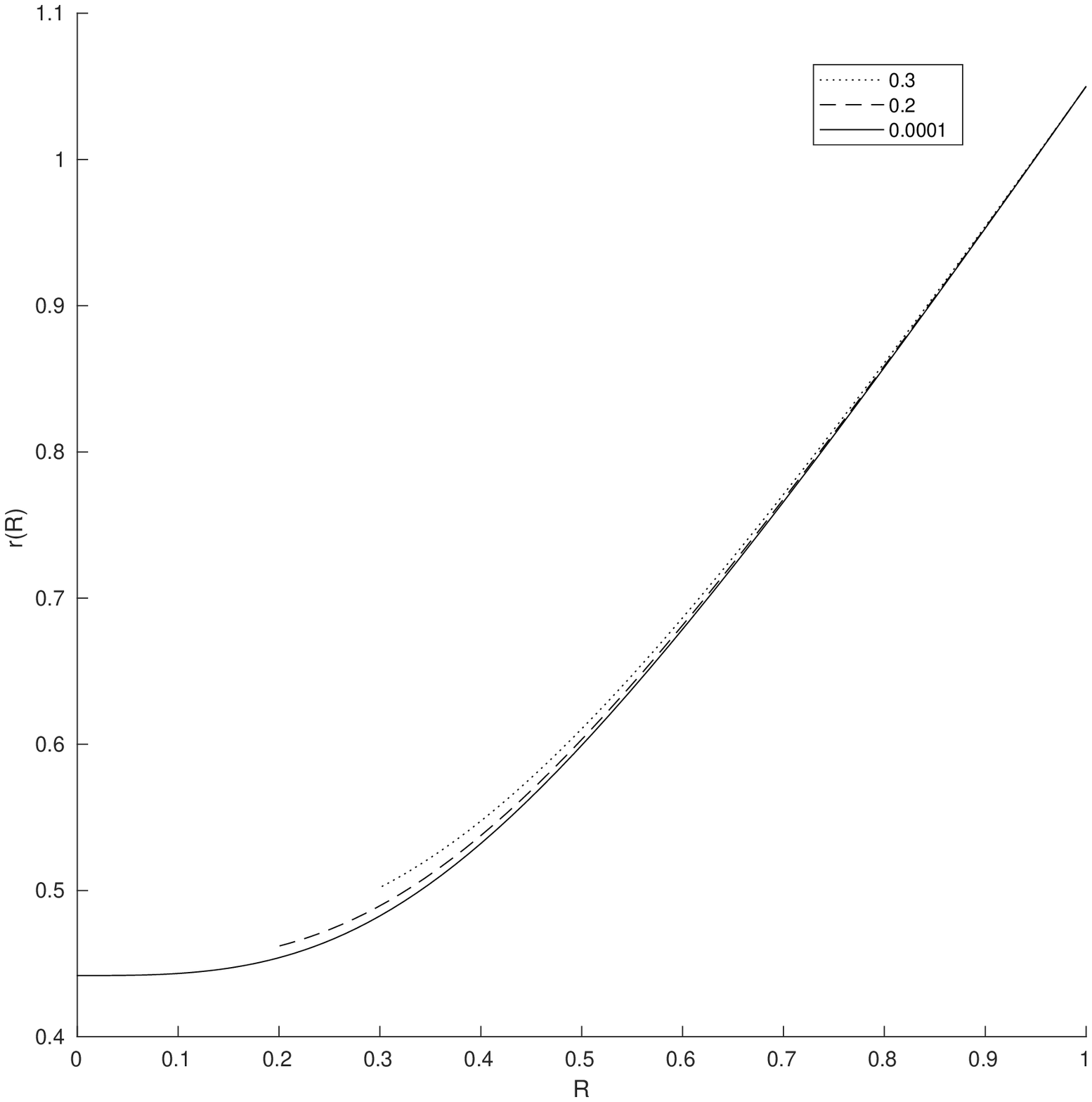}}&
  \subfloat[$\hat{T}(r_\eps(\cdot))$]
{\label{fig:Tc2}{\includegraphics[width=0.42\textwidth]{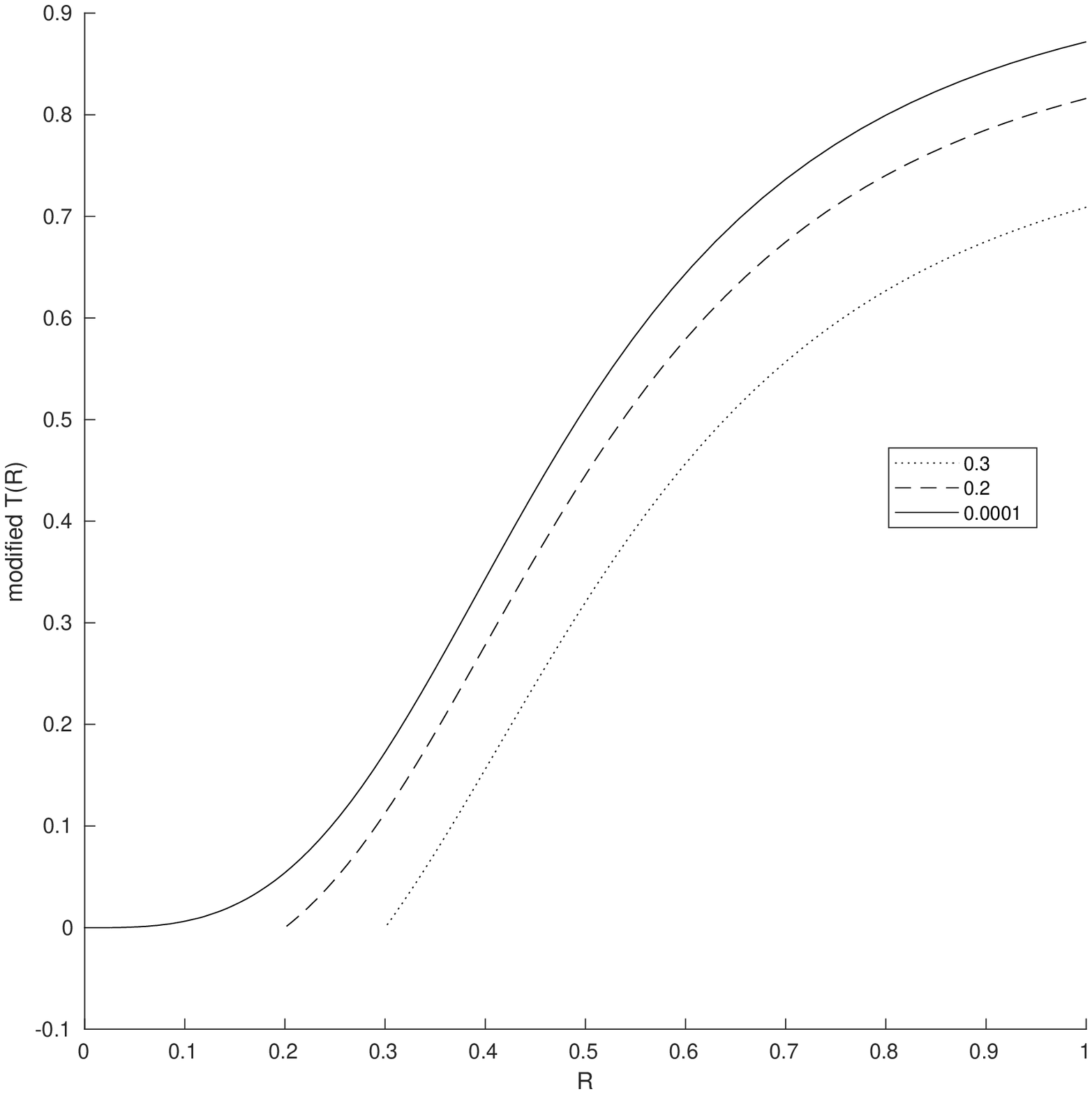}}}
\end{tabular}
\caption{Computed minimizers of $\hat{I}_\eps$ when $\lambda=1.05$ and
$\eps=0.3, 0.2, 10^{-4}$.}\label{fig:1}
\end{figure}
\begin{figure}
  \begin{tabular}{cc}
  \subfloat[$r_\eps$]
{\label{fig:rc4}\includegraphics[width=0.5\textwidth]{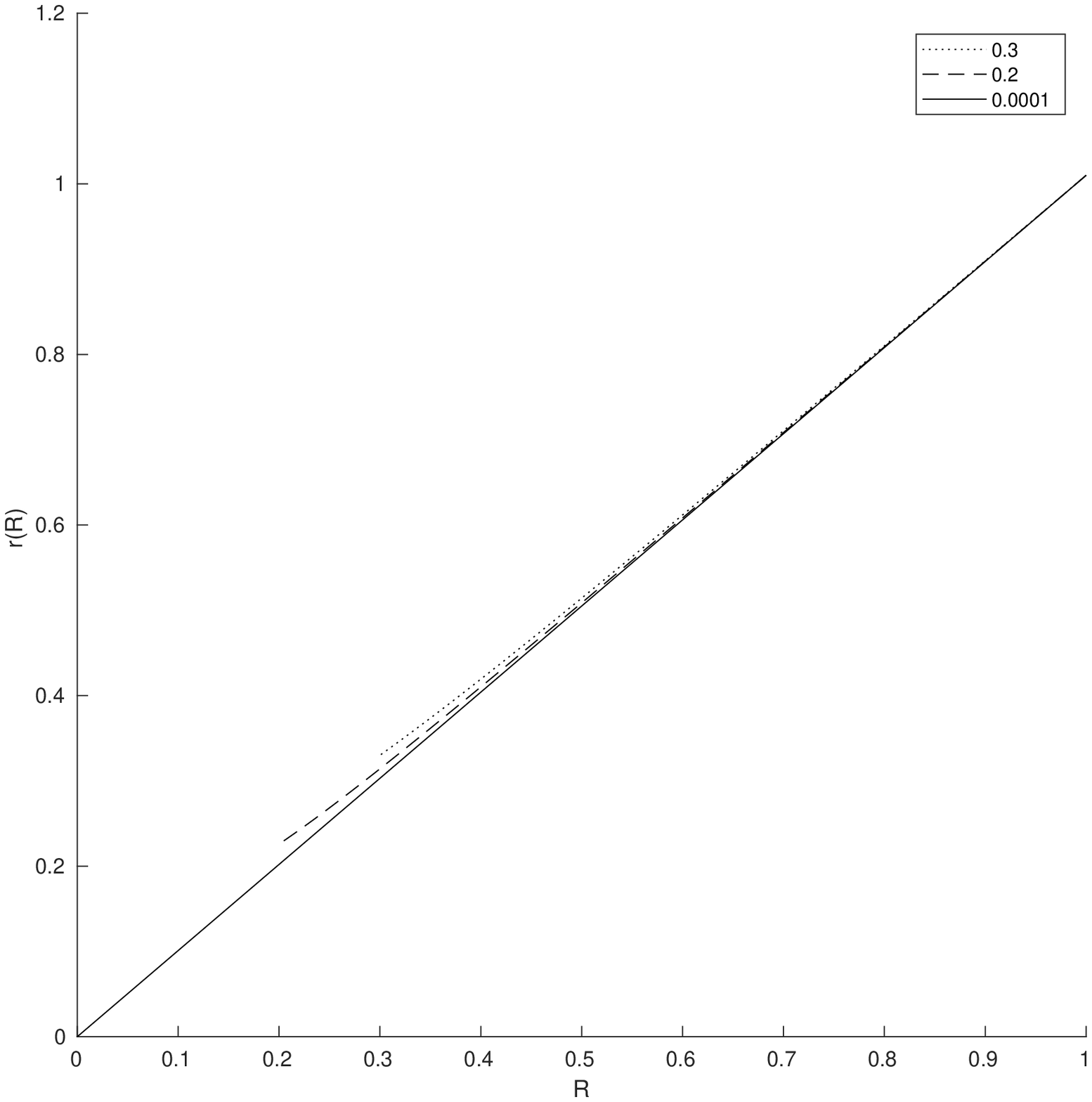}}&
  \subfloat[$\hat{T}(r_\eps(\cdot))$]
{\label{fig:Tc4}{\includegraphics[width=0.42\textwidth]{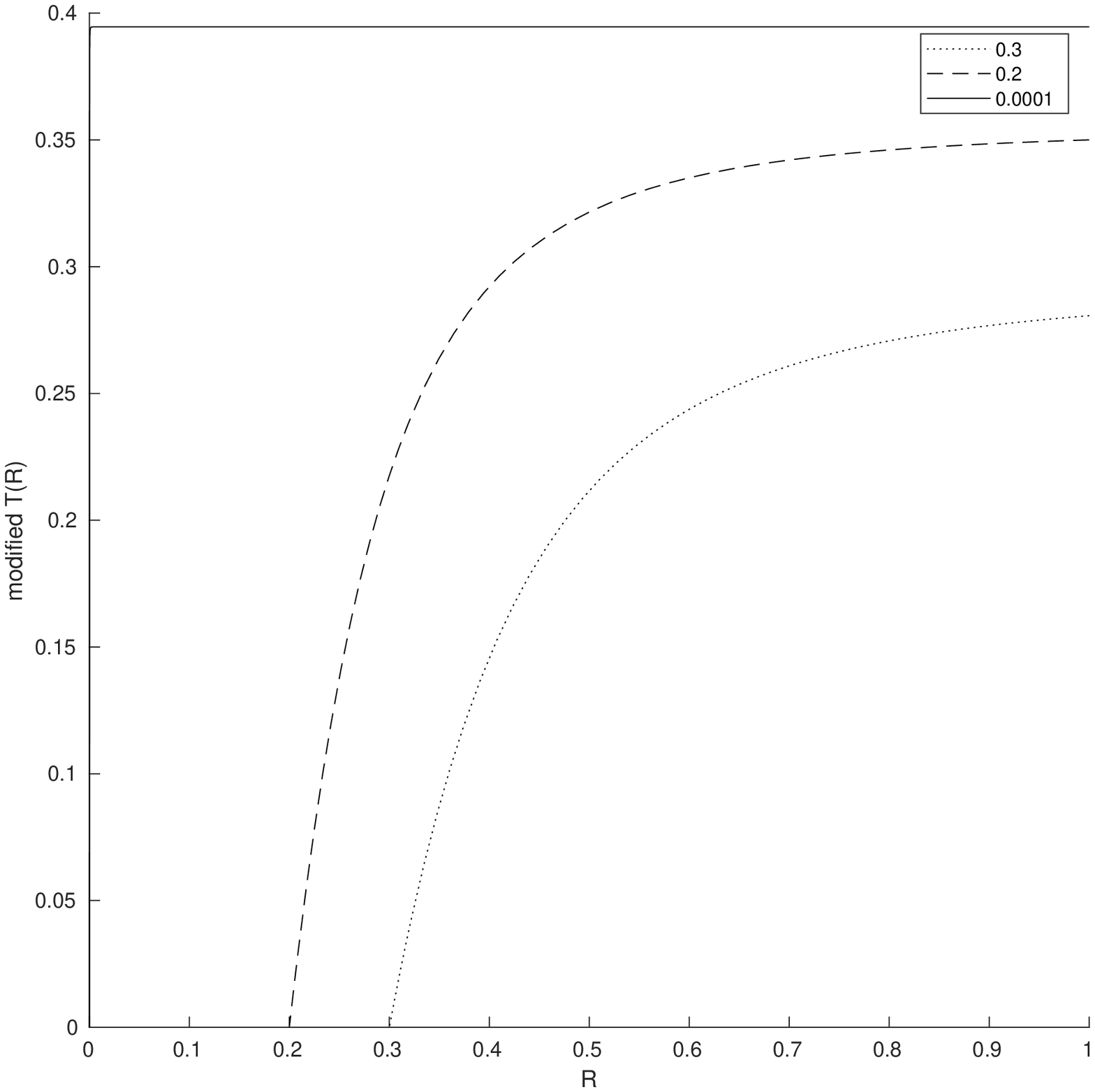}}}
\end{tabular}
\caption{Computed minimizers $r_\eps$ and modified Cauchy stress
functions $\hat{T}(r_\eps(\cdot))$ for $\hat{I}_\eps$ when $\lambda=1.01$ and
$\eps=0.2, 0.1, 0.05, 10^{-4}$.}\label{fig:4}
\end{figure}

\begin{figure}
  \begin{tabular}{cc}
  \subfloat[$r_\eps$]
{\label{fig:rc3}\includegraphics[width=0.5\textwidth]{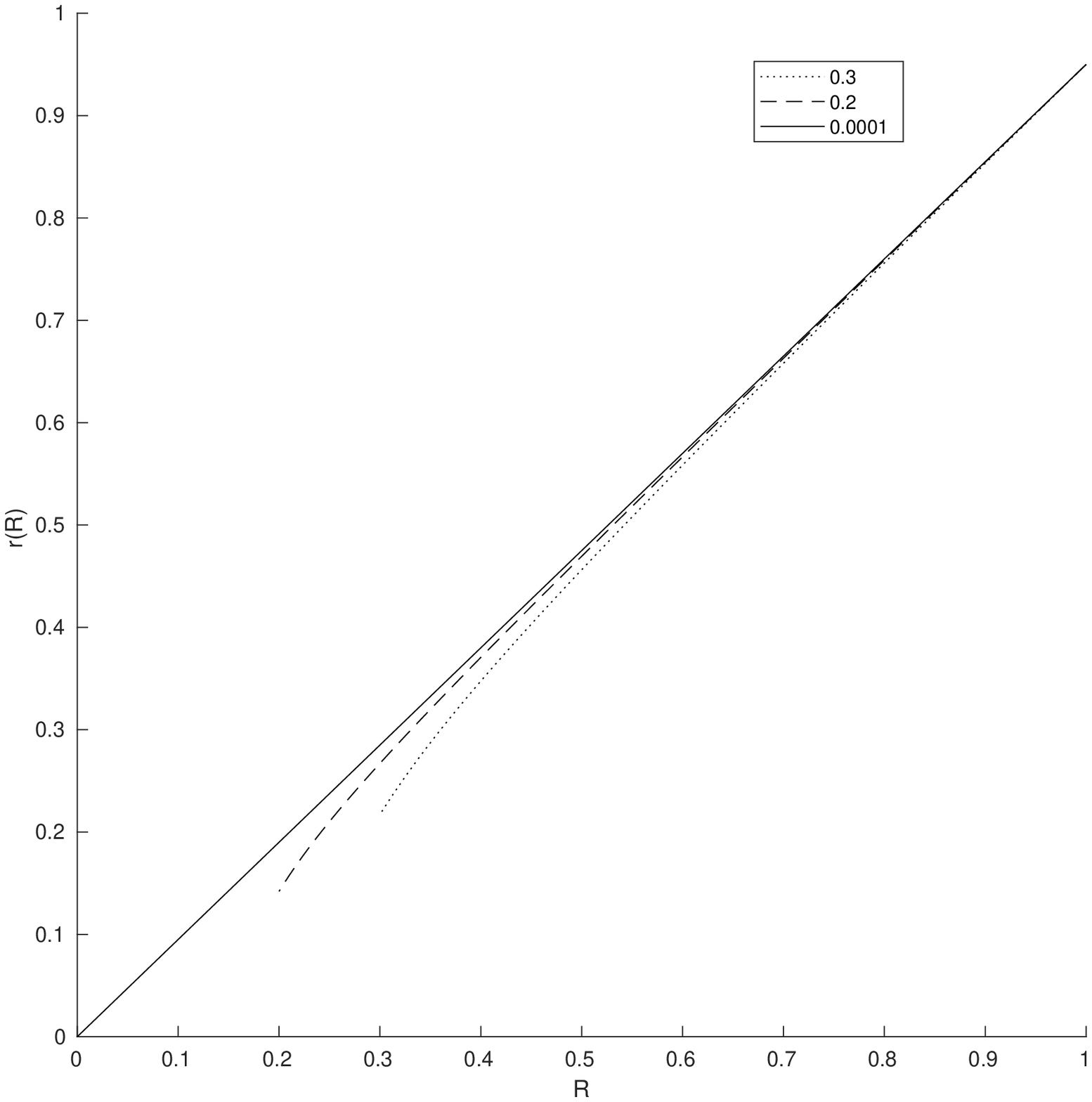}}&
  \subfloat[$\hat{T}(r_\eps(\cdot))$]
{\label{fig:Tc}{\includegraphics[width=0.42\textwidth]{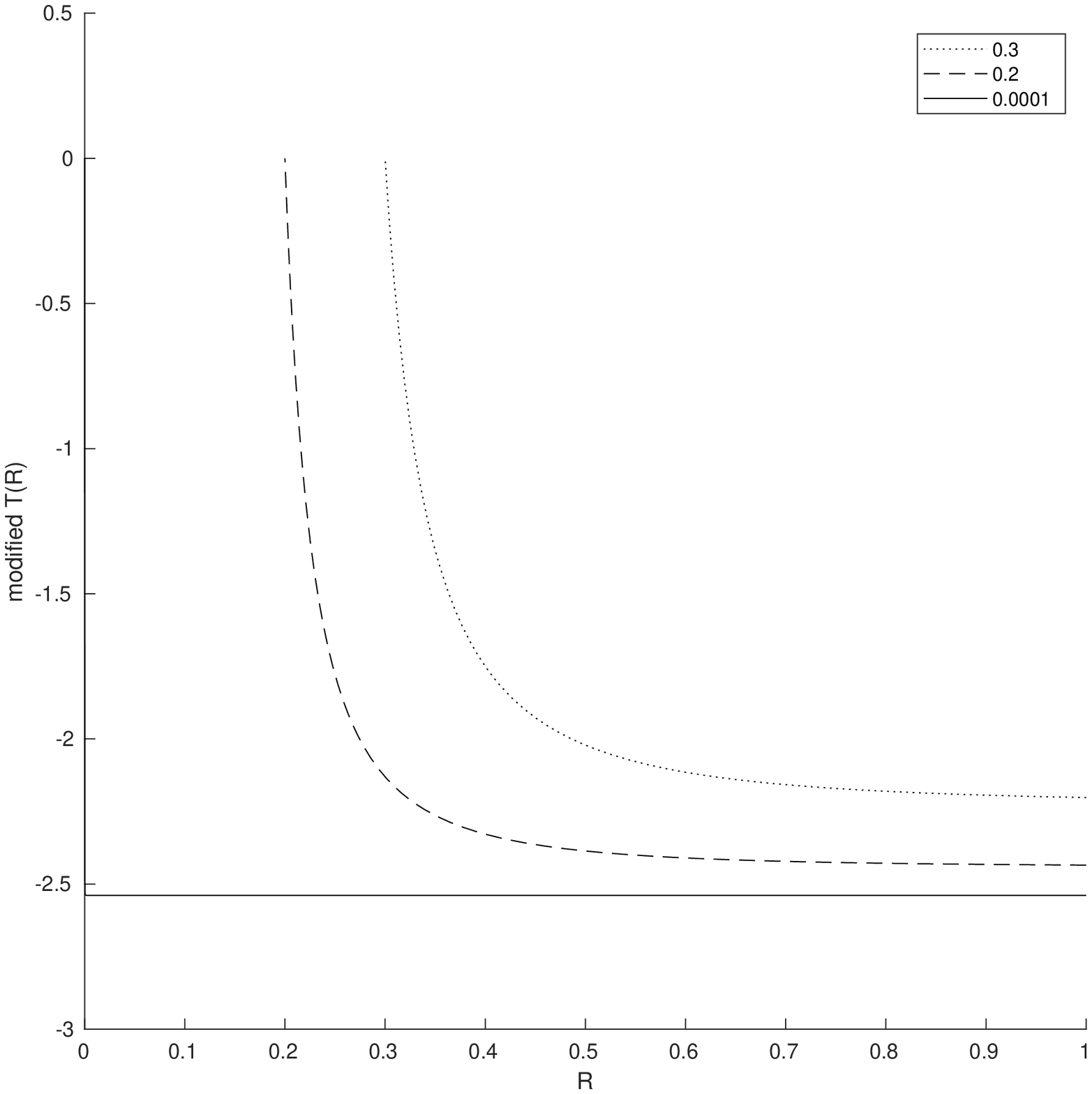}}}
\end{tabular}
\caption{Computed minimizers $r_\eps$ and modified Cauchy stress
functions $\hat{T}(r_\eps(\cdot))$ for $\hat{I}_\eps$ when $\lambda=0.95$ and
$\eps=0.2, 0.1, 0.05, 10^{-4}$.}\label{fig:3}
\end{figure}
\begin{figure}
\begin{center}
\scalebox{0.4}{\includegraphics{}}
\end{center}
\caption{Minimizers of modified compressible problems approaching the 
incompressible deformation in the incompressible limit with $C$ increasing.}
\label{fig:5}
\end{figure}

\end{document}